\documentclass[11pt,a4paper]{article}
\usepackage{pb-diagram}
\usepackage{amscd,amsmath,amsthm}
\newtheorem{theorem}{Theorem}[section]
\newtheorem{lemma}[theorem]{Lemma}

\newtheorem{corollary}[theorem]{Corollary}
\newtheorem{proposition}[theorem]{Proposition}

\newtheorem{definition}{Definition}[section]
\newtheorem{example}{Example}[section]

 \input amssym.def
\input amssym.tex

\textwidth=150truemm \textheight=240truemm \topmargin=0mm
\oddsidemargin=0cm \evensidemargin=0cm \headsep = 6mm
\pagestyle{headings}

\def\bc{\begin{center}}
\def\ec{\end{center}}

\begin{document}
\title{Modulation and natural valued quiver of an algebra
\thanks{Project supported by the National Natural Science Foundation of China (No.10871170) and the Zhejiang Provincial Natural Science Foundation of China (No.D7080064)}}
\author{Fang Li\thanks{fangli@zju.edu.cn} \\{Department of Mathematics, Zhejiang
University}\\{Hangzhou, Zhejiang 310027, China}} \maketitle
\begin{abstract}
The concept of modulation is generalized to
  pseudo-modulation and its subclasses including  pre-modulation, generalized modulation
   and regular modulation.
 The motivation is to define the valued analogue of natural quiver,  called {\em natural valued quiver},
 of an artinian algebra so as to correspond to its valued Ext-quiver when this algebra is
not $k$-splitting over the field $k$. Moreover, we illustrate the
relation between the valued Ext-quiver and the natural valued
quiver.

The interesting fact we find is that the representation categories
of a pseudo-modulation and of a pre-modulation are equivalent
respectively to that of a tensor algebra of $\mathcal A$-path type
and of a generalized path algebra. Their examples are given
respectively from two kinds of artinian hereditary algebras.
Furthermore, the isomorphism theorem is given for normal generalized
path algebras with finite (acyclic) quivers and normal
pre-modulations.

Four examples of pseudo-modulations are given: (i) group species in
mutation theory
 as a semi-normal generalized modulation; (ii) viewing a path algebra with
loops as a pre-modulation with valued quiver which has not loops;
(iii)  differential pseudo-modulation and its relation with
differential tensor algebras; (iv)  a pseudo-modulation is
considered as a free graded category.

\end{abstract}
\textbf{2010 Mathematics Subject Classifications: 16G10; 16G20}
\\
\textbf{keywords:} pseudo-modulation; tensor algebra; natural valued
quiver; valued Ext-quiver; artinian algebra; generalized path
algebra
\\

\section{Introduction}

Throughout this paper, $k$ denotes the ground field.

 It is well-known that for an artinian $k$-algebra $A$, one has either the
Ext-quiver in the case $A$ is $k$-splitting, or the valued
Ext-quiver in otherwise case. This quiver $\Gamma$ is used to
characterize the structure of $A$ by Gabriel theorem when $A$ is
basic, that is, $A\cong k\Gamma/I$ with admissible ideal $I$ if $A$
is  $k$-splitting (e.g. if $k$ is algebraically closed). Motivated
by it, in \cite{LL}, we define the notion of natural quiver for any
artinian algebra in order to constitute the analogue of Gabriel
theorem in the case that $A$ is not $k$-splitting and even not
basic. This aim has been achieved in the case when $A$ is splitting
over radical in \cite{LL}. The other important hand is the relation
between natural quiver and Ext-quiver of a  $k$-splitting artinian
algebra, which is also given in \cite{LL}.

 However, when $A$ is not
$k$-splitting, the valued Ext-quiver of $A$ can not be compared with
the natural quiver of $A$. Hence, in general case, we have to
consider  the questions:

(i)~ How to define the valued analogue of natural quiver of $A$ so
as to correspond to the valued Ext-quiver of $A$?

(ii)~ Following (i), give the relation between the valued Ext-quiver
and the valued analogue of natural quiver of $A$.

The first aim of this paper is to answer these questions. For this,
 the concept of modulation is  generalized to
 the so-called {\em pseudo-modulation} and its subclasses including {\em pre-modulation}, {\em
 generalized modulation} and {\em regular modulation}, in Section 3.

For an artinian algebra $A$,   the alteration of natural quiver
corresponding to valued Ext-quiver, called as {\em natural valued
quiver}, is
    introduced  via the valued quiver of the corresponding pre-modulation of $A$.
  In the case $A$ is basic, it is shown that
     the natural valued quiver is pair-opposite equal to the valued Ext-quiver
     (see Theorem
     \ref{thm-natural-ext}). Moreover, in Theorem \ref{lastthm}, for any artinian algebra $A$,
     the relation between its natural valued quiver and valued Ext-quiver is
     obtained,
     as an improvement of the relation between the natural quiver and
     Ext-quiver in the case $A$ is over an algebraically closed field in \cite{LL}.

The representation categories of a pseudo-modulation and of a
pre-modulation are equivalent respectively to that  of a tensor
algebra of $\mathcal A$-path type and that of a generalized path
algebra (Theorem \ref{th2.4} and Corollary \ref{prop2.8}). Their
examples are given respectively from two kinds of artinian
hereditary algebras (Corollary \ref{cor2.4} and Proposition
\ref{coro2.14}). Furthermore, the isomorphism theorem is given for
normal generalized path algebras with finite (acyclic) quivers and
normal pre-modulations in Theorem \ref{th2.7}.

 The notion of  modulation was introduced in \cite{D} and
 \cite{DR} to characterize representations of a
  valued quiver over a field $k$, which is not necessarily algebraically
 closed, using the method of Coxter functors in Bernstein-Gelfand-Ponomarev's theory.
 This aspect will be discussed for pseudo-modulations in the
 follow-up work.

Pseudo-modulations, as well as generalized path algebras in
\cite{LC}\cite{LL}, can be realized as the tool to investigate some
properties of structures and representations of an algebra which are
not Morita invariants in the reason that (valued) natural quiver is
not Morita invariant.

A kind of (semi-)normal generalized modulation is characterized, see
Theorem \ref{prop2.3}. In Section 6, its interesting example is
given from group species in mutation theory \cite{Dem}.

 In the theory of mutations \cite{La}\cite{Dem}, it
is known that for a finite dimensional basic hereditary algebra
$A\cong k\Gamma$ for a quiver $\Gamma$, under the condition the
mutation can be defined, the mutation of $A$ is isomorphic to the
path algebra of the quiver which is the mutation of $\Gamma$. Since
mutations are perverse equivalent but not Morita equivalent (see
\cite{La}), it is interesting to constitute the mutation theory of
finite dimensional (non-basic in general) algebras via semi-normal
generalized modulations, due to Proposition \ref{prop5.1}.

Moreover, in Section 6, we suggest the method to transfer the study
on path algebras whose quiver has loops into that on generalized
path algebras and pre-modulations with valued quiver which has not
loops. And, we still give the notion of differential
pseudo-modulation and its relation with differential tensor
algebras. Lastly, a $k$-pseudo-modulation $\mathcal M$ and also the
related tensor algebra of $\mathcal A$-path-type  $T(\mathcal M)$
are equivalently
 considered as a free graded category $\mathcal T$.

\section{Some preliminaries}

{\bf 2.1}~ A {\em quiver} $Q$ can be understood as two sets $Q_0$
and $Q_1$ together with a map $Q_1\rightarrow Q_0\times Q_0$ denoted
by $a\mapsto (t(a), h(a))$ with $h(a)$ being called the {\em head}
of the arrow $a$ and $t(a)$ being called the {\em tail} of $a$. For
each pair $(i,j)\in Q_0\times Q_0$, we define
\[ \Omega(i,j)=\{ a \in Q_1\;|\; t(a)=j, h(a)=i\}.\]
Note that $ Q_1 $ is the disjoint union of all $ \Omega(i,j)$ for
$i,j\in Q_0$.

Forgetting  the orientation of all arrows in the quiver $Q$, we get
the underlying graph of $Q$, which is denoted by $\overline Q$.\\
\\
 {\bf 2.2}~ A {\em pseudo-valued graph} $(\mathcal{G},
\mathcal{D})$ consists of:

(i) A finite set $\mathcal{G}=\{i,j,\cdots \}$  whose elements are
called {\em vertices};

(ii) To any ordered pair $(i,j)\in \mathcal{G}\times\mathcal{G}$,
there corresponds a non-negative integer $d_{ij}$ satisfying that if
$d_{ij}\not=0$ then $d_{ji}\not=0$ for any $(i,j)\in
\mathcal{G}\times\mathcal{G}$. If $d_{ij}\not=0$, such a pair
$(i,j)$ is called an {\em edge} between the vertices $i$ and $j$,
which is written as
%\begin{figure}[hbt]
\begin{picture}(0,0)(5,-2)
 \put(62,-3){\makebox(0,0){$\bullet$}}
 \put(69,-3){\makebox(0,0){$j$.}}\put(35,7){\makebox(0,0){$(d_{ij}, d_{ji})$}}
\put(17,-3){\line(1,0){40}}\put(15,-3){\makebox(0,0){$\bullet$}}\put(10,-3){\makebox(0,0){$i$}}
\end{picture}
%\end{figure}
~~~~~~~~~~~~~~~~~ If $d_{ij}= d_{ji}=1$, write simply
 %\begin{figure}[hbt]
%\begin{picture}(50,70)(-120,-10)
\begin{picture}(0,0)(5,-2)
 \put(50,0){\makebox(0,0){$\bullet$}}
 \put(55,0){\makebox(0,0){$j$.}}
\put(17,0){\line(1,0){30}}\put(15,0){\makebox(0,0){$\bullet$}}\put(10,0){\makebox(0,0){$i$}}
\end{picture}
%\end{figure}

Of course, $d_{ij}=d_{ji}$ when $i=j$.

The family $\mathcal D=\{(d_{ij}, d_{ji}): (i,j)\in
\mathcal{G}\times\mathcal G\}$ is called a {\em valuation} of the
graph $\mathcal{G}$.

Moreover, due to \cite{DR}\cite{D}, for a pseudo-valued graph
$(\mathcal{G}, \mathcal{D})$, if there exist positive integers
$\varepsilon_i$ ($i\in \mathcal{G}$) such that
$d_{ij}\varepsilon_j=d_{ji}\varepsilon_i$ for all $i,j\in
\mathcal{G}$, then $(\mathcal{G}, \mathcal{D})$ is called a {\em
valued graph}.

An {\em orientation} $\Omega$ of a (resp. pseudo-)valued graph
$(\mathcal{G}, \mathcal{D})$ is given by prescribing for each edge
an ordering, indicated by an oriented edge, that is,
%\begin{figure}[hbt]
\[
 \begin{picture}(0,0)(5,-2) \put(-112,0){\makebox(0,0){either}}
 \put(-38,-3){\makebox(0,0){$\bullet$}}
 \put(-33,-3){\makebox(0,0){$j$}}\put(-65,7){\makebox(0,0){$(d_{ij}, d_{ji})$}}
\put(-83,-3){\vector(1,0){40}}\put(-85,-3){\makebox(0,0){$\bullet$}}\put(-90,-3){\makebox(0,0){$i$}}
\end{picture}
%\end{figure}
%\begin{figure}[hbt]
\begin{picture}(0,0)(5,-2)
 \put(-20,0){\makebox(0,0){or}}
 \put(49,-3){\makebox(0,0){$\bullet$}}
 \put(54,-3){\makebox(0,0){$j$.}}\put(25,7){\makebox(0,0){$(d_{ij},
 d_{ji})$}}
\put(45,-3){\vector(-1,0){40}}\put(0,-3){\makebox(0,0){$\bullet$}}\put(-5,-3){\makebox(0,0){$i$}}
\end{picture}\]
%\end{figure}

 We call a (resp. pseudo-)valued graph with orientation a {\em (resp. pseudo-)valued quiver}, which is denoted as $(\mathcal{G},\mathcal D,\Omega)$.

A vertex $k\in \mathcal{G}$ in the valued quiver
$(\mathcal{G},\mathcal D,\Omega)$ is called a {\em sink}
(respectively, a {\em source}) if $i\not=k$ (respectively,
$j\not=k$) for any oriented edge
\begin{picture}(0,0)(5,-2)
 \put(62,-3){\makebox(0,0){$\bullet$}}
 \put(67,-3){\makebox(0,0){$j$.}}\put(35,7){\makebox(0,0){$(d_{ij},  d_{ji})$}}
\put(17,-3){\vector(1,0){40}}\put(15,-3){\makebox(0,0){$\bullet$}}\put(10,-3){\makebox(0,0){$i$}}
\end{picture}

A {\em path} of the pseudo-valued quiver $(\mathcal{G},\mathcal
D,\Omega)$ is a sequence $k_1,k_2,\cdots,k_t$ of vertices such that
there is a valued oriented edge from $k_s$ to $k_{s+1}$ for
$s=1,2,\cdots,t-1$. Its {\em length} is defined to be the number of
the valued oriented edges in this path, that is, $t-1$.\\
\\
{\bf 2.3}~ Due to \cite{D}, a {\em $k$-modulation}
$\mathcal{M}=(F_i,$$\;_iM_j)$ {\em of a valued graph}
$(\mathcal{G},\mathcal D)$ is a set of division algebras
$\{F_i\}_{i\in \mathcal{G}}$ which are finite-dimensional over
 a common central subfield $k$, together with a set $\{_iM_j\}_{i,j\in \mathcal{G}}$ of $F_i$-$F_j$-bimodules on which $k$ acts centrally such that
$dim(_iM_j)_{F_j}=d_{ij}$, $dim_{F_i}(_iM_j)= d_{ji}$ and $_jM_i$
is a dual of the bimodule $_iM_j$ in the sense  that we have
bimodule isomorphisms:
\[
_jM_i\cong Hom_{F_i}(_iM_j,F_i)\cong Hom_{F_j}(_iM_j,F_j).
\]
  Note that the final isomorphism is from \cite{D}Lemma 0.2; there is an edge between $i$ and $j$ if and only if  $_iM_j$ and $_jM_i$ are nonzero.

Now, let $(\mathcal{M},\Omega)$ be a pair consisting of a
$k$-modulation $\mathcal{M}$ of the connected valued graph
$(\mathcal{G},\mathcal D)$, equivalently say, let
$\mathcal{M}=(F_i,$$\;_iM_j)$ be a {\em $k$-modulation of a valued
quiver} $(\mathcal{G},\mathcal D,\Omega)$.
\\
\\
{\bf 2.4}~ Associated with the pair $(A,\;_{A}M_{A})$ for a
$k$-algebra $A$ and an $A$-bimodule $M$, we write the $n$-fold
$A$-tensor product $M\otimes_{A}M\otimes\cdot\cdot\cdot\otimes_{A}M$
as $M^{n}$, then $T(A,M)=A\oplus M\oplus
M^{2}\oplus\cdot\cdot\cdot\oplus M^{n}\oplus\cdot\cdot\cdot$ as an
abelian group. Writing $M^{0}=A$, then $T(A,M)$ becomes a
$k$-algebra with multiplication induced by the natural $A$-bilinear
maps $M^{i}\times M^{j}\rightarrow M^{i+j}$ for $i\geq 0$ and $j\geq
0$.  $T(A,M)$ is called the {\em tensor algebra} of $M$ over $A$.

For a $k$-modulation $\mathcal{M}=(F_i,$$\;_iM_j)$ of a valued
quiver $(\mathcal{G},\mathcal D,\Omega)$, we get the tensor algebra
$T(\mathcal{M})\stackrel{def}{=}T(F,M)$
 for $F=\oplus_{i\in \mathcal{G}}F_i$ and $M=\oplus_{(i,j)\in\mathcal G\times\mathcal G}$$_iM_j$, where $M$ is acted by $F$ as
 an $F$-$F$-bimodule through the projection maps
  $F\rightarrow F_i$
  for $i\in \mathcal{G}$.

 The interest of representations is displayed
 by the following:

 \begin{theorem}\label{thm2.1}{\em \cite{D}$\;$}
Let $\mathcal{M}=(F_i,$$\;_iM_j)$ be a  $k$-modulation  of a valued
quiver $(\mathcal{G},\mathcal D,\Omega)$. Then the category
$rep(\mathcal{M})$ of all finite-dimensional representations
 is equivalent to the category $mod_{T(\mathcal{M})}$ of all finitely generated right
$T(\mathcal{M})$-modules.
 \end{theorem}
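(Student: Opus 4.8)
The plan is to exhibit explicit mutually quasi-inverse functors between the two categories by exploiting the free (universal) nature of the tensor algebra $T(\mathcal{M})=T(F,M)$ together with the idempotent decomposition coming from $F=\oplus_{i\in\mathcal{G}}F_i$.

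First I would record the defining property of a tensor algebra at the level of modules: since $T(F,M)$ is the free $F$-ring generated by the $F$-bimodule $M$, with multiplication induced by the $F$-bilinear maps $M^{i}\times M^{j}\to M^{i+j}$ and no further relations, giving a right $T(\mathcal{M})$-module is the same as giving a right $F$-module $X$ together with a right $F$-linear map $\phi:X\otimes_F M\to X$; the action of each $M^n$ is then forced by iterating $\phi$. This reduces the theorem to matching the pairs $(X,\phi)$ with representations of $\mathcal{M}$.

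Next I would decompose everything along the complete set of orthogonal idempotents $\{e_i\}_{i\in\mathcal{G}}$ of the semisimple algebra $F$. Writing $X_i=Xe_i$, each $X_i$ is a right $F_i$-module and $X=\oplus_i X_i$. Because $F$ acts on $_iM_j$ on the left through $F\to F_i$ and on the right through $F\to F_j$, one computes $X\otimes_F M=\oplus_{i,j}X_i\otimes_{F_i}\,_iM_j$, and the right $F$-linearity of $\phi$ forces it to split into a family of right $F_j$-linear maps $\phi_{ij}:X_i\otimes_{F_i}\,_iM_j\to X_j$. These $\phi_{ij}$ are precisely the structure maps attached to the (oriented) edges in a representation of $\mathcal{M}$, and conversely any such family assembles back into a single $\phi$. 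I would package this as functors $\Phi:mod_{T(\mathcal{M})}\to rep(\mathcal{M})$, $X\mapsto(Xe_i;\phi_{ij})$, and $\Psi$ in the reverse direction, then check that a $T(\mathcal{M})$-module homomorphism is exactly a family of $F_i$-linear maps commuting with the $\phi_{ij}$; this yields natural isomorphisms $\Phi\Psi\cong\mathrm{id}$ and $\Psi\Phi\cong\mathrm{id}$, so $\Phi$ and $\Psi$ are mutually quasi-inverse.

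The step needing the most care, and the main obstacle, is matching the two finiteness conditions: \emph{finite-dimensional representation} against \emph{finitely generated $T(\mathcal{M})$-module}. A finite-dimensional representation corresponds under $\Psi$ to a module $X=\oplus_i X_i$ that is finite-dimensional over $k$ (using that $\mathcal{G}$ is finite, each $F_i$ is finite-dimensional over $k$, and each $d_{ij}$ is finite so $M$ is finite-dimensional), hence finitely generated over $T(\mathcal{M})$. For the converse I would use the grading $T(\mathcal{M})=\oplus_{n\ge 0}M^n$ with semisimple degree-zero part $F$: finitely many generators produce, through $\phi$ and its iterates, graded pieces controlled by the finite-dimensional $M$, and in the relevant acyclic situation $T(\mathcal{M})$ is itself finite-dimensional over $k$, so finite generation coincides with finite $k$-dimension and the correspondence restricts cleanly. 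Verifying this equivalence of finiteness conditions, together with confirming that the orientation $\Omega$ selects the correct summands $_iM_j$ of $M$ so that $\Phi$ and $\Psi$ land in the stated categories, are the only genuinely non-formal points; the object-level bijection itself is a direct consequence of the universal property recorded above.
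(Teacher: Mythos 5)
Your proposal is correct and follows essentially the same route the paper takes for the generalized statement (Theorem \ref{th2.4}, of which this theorem is the special case with division algebras $F_i$): decompose a right $T(\mathcal{M})$-module along the orthogonal idempotents of $F=\oplus_{i\in\mathcal{G}}F_i$ to recover the spaces $V_i=Ve_i$ and the structure maps $_j\varphi_i$, and conversely assemble a representation into a module on which the action of each $M^{n}$ is forced by iterating the degree-one action. Your added care about matching finite-dimensionality of representations with finite generation of modules --- which genuinely requires $T(\mathcal{M})$ to be finite-dimensional, i.e.\ the acyclic/admissible-orientation setting of \cite{D} --- addresses a point that the paper's own argument for the generalization passes over silently.
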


This result is the generalization of that for the representation
category of a finite-dimensional path algebra (see Theorem III.1.5
in \cite{A}).

Modulation  and its representations will be generalized in Section 3
such that they are only in the special case with linear spaces over
division algebras.
\\
\\
{\bf 2.5}~ For two rings $A$ and $B$,  the {\em rank} of a finitely
generated left $A$-module (resp. right $B$-module, $A$-$B$-bimodule)
$M$ is defined as the minimal cardinal number of the sets generators
of $M$ as left $A$-module (resp. right $B$-module,
$A$-$B$-bimodule), which is denoted by $rank_AM$ (resp. $rankM_B$,
$rank_AM_B$).
 Clearly, if $M$ is finitely generated, such rank always exists. As a convention, the rank of the module $0$ is said
 to be $0$.

Let $X=\{m_i\}_{i=1}^s$ be the set of generators of a finitely
generated $A$-$B$-bimodule $M$, i.e. $M=\sum_{i=1}^sAm_iB$.
 If there do not exist $k$-linearly independent sets  $$\{a_{iu}\in A:
 i=1,\cdots,s;u=1,\cdots,p\}\ \ \ \text{and}\ \ \
\{b_{iu}\in B: i=1,\cdots,s;u=1,\cdots,p\}$$ satisfying
 $\sum_{i=1,\cdots,s;u=1,\cdots,p}a_{iu}m_ib_{iu}=0$, we say the set
 $X$ to be $A$-$B$-{\em linearly independent}.
 In this case, we
call
  $M$ a {\em
free $A$-$B$-bimodule with basis } $X$.

Clearly, if $M$ is a free $A$-$B$-bimodule with basis
$\{m_i\}_{i=1}^s$ and $\{b_j\}_{j=1}^t$ is a $k$-basis of $B$ (resp.
$\{a_j\}_{j=1}^t$ is a $k$-basis of $A$), then $M$ is a left free
$A$-module with basis $\{m_ib_j\}_{i=1,\cdot,s;j=1,\cdots,t}$ (resp.
right free $B$-module with basis
$\{a_jm_i\}_{i=1,\cdot,s;j=1,\cdots,t}$.).

 Each $A$-$B$-bimodule $M$ can be realized as a
right $B\otimes A^{op}$-module. So, $M$ is a free $A$-$B$-bimodule
$M$ if and only if $M$ is a  free right $B\otimes A^{op}$-module. In
this case, let $M\cong\sum_im_i(B\otimes A^{op})$ with basis
$\{m_i\}$. Let $\{a_j\}$ be a $k$-basis of $A$. Then $M\cong
\sum_{ij}m_ia_j\otimes B$ as $B$-modules where
$m_ia_j\stackrel{def}=a_jm_i$. It says that $\{a_jm_i\}$ is a
$B$-basis of $M=\sum_im_i(B\otimes
A^{op})$.\\
\\
{\bf 2.6}~ The concept of generalized path algebra was introduced
early in \cite{CL}. Here we review the different but equivalent
definition which is given in \cite{LL}.

Let $Q=(Q_{0},Q_{1})$ be a quiver. Given a collection of $
k$-algebras $ \mathcal{A}=\{ A_i \;|\; i \in Q_0\}$ with  the
identity $e_i \in A_i $.  Let $ A_0=\prod_{i\in Q_0}A_i$ be the
direct product $k$-algebra. Clearly, each $e_i$ is an orthogonal
central idempotent of $A_0$. Let
\begin{equation}\label{mm1}
_iM_j\stackrel{def}=A_i \Omega(i,j)A_j
\end{equation}
 be the free
$A_i$-$A_j$-bimodule with basis $\Omega(i,j)$. This is the free $
A_i\otimes_{k} A_j^{op}$-module over the set $\Omega(i,j)$. Then,
the rank of $_iM_j$ as $A_i$-$A_j$-bimodule is just the number of
arrows from $i$ to $j$ in the quiver $Q$. Thus,
\begin{equation}\label{mm2}
M=\oplus_{(i,j)\in Q_0\times Q_0} A_{i}\Omega(i,j)A_j
\end{equation}
is an $A_0$-$A_0$-bimodule.  The {\em generalized path
algebra}$^{\cite{CL}\cite{Li}\cite{LL}}$ is defined to the tensor
algebra
\[ T(A_0,M)=\oplus_{n=0}^{\infty} M^{\otimes_{A_0}n}.\]
Here $M^{\otimes_{A_0}n}=M\otimes_{A_0}M\otimes_{A_0}\cdots
\otimes_{A_0}M$ and $ M^{\otimes_{A_0}0}=A_0$. We denote by $ k(Q,
\mathcal{A})$ the generalized path algebra.  $k(Q, \mathcal{A})$ is
called {\em (semi-)normal} if all $A_i$ are (semi-)simple
$k$-algebras.

\section{Pseudo-modulations and representations of
 algebras}\label{subsec2.2}

As we have seen, modulation essentially is determined by a tensor
algebra. At this viewpoint, we will give the notion of
pseudo-modulation in a more general way. According to our need, the
discussion will be restricted to some special cases of
pseudo-modulations.

\begin{definition}\label{def2.1}
(i)~A {\em $k$-pseudo-modulation} $\mathcal{M}=(A_i,$$\;_iM_j)$ {\em
of a pseudo-valued graph} $(\mathcal{G},\mathcal D)$ is defined as a
set of
 artinian $k$-algebras $\{A_i\}_{i\in \mathcal{G}}$, together with a
 set $\{_iM_j\}_{(i,j)\in \mathcal{G}\times\mathcal{G}}$
 of finitely generated unital $A_i$-$A_j$-bimodules $_iM_j$ such that
$$rank(_iM_j)_{A_j}=d_{ij}\ \ \ \ \text{and}\ \ \ \
rank_{A_i}(_iM_j)= d_{ji}.$$

(ii)~A $k$-pseudo-modulation $\mathcal{M}=(A_i,$$\;_iM_j)$  of a
pseudo-valued graph $(\mathcal{G},\mathcal D)$ is said to be {\em
(semi-)normal } if all $A_i$ ($i\in \mathcal G$) are (semi-)simple
algebras.

(iii)~For a $k$-pseudo-modulation $\mathcal{M}=(A_i,$$\;_iM_j)$ of a
valued graph $(\mathcal{G},\mathcal D)$, if all $_iM_j$ are free as
$A_i$-$A_j$-bimodule, then this pseudo-modulation is called a {\em
$k$-pre-modulation}.

(iv)~If a $k$-pseudo-modulation $\mathcal{M}=(A_i,$$\;_iM_j)$ of a
pseudo-valued graph $(\mathcal{G},\mathcal D)$ satisfies
\begin{equation}\label{keyequ}
 Hom_{A_i}(_iM_j, A_i)\cong Hom_{A_j}(_iM_j, A_j)
\end{equation}
  as $A_j$-$A_i$-bimodules for
 any
  ${(i,j)\in \mathcal{G}\times\mathcal{G}}$, then this pseudo-modulation is called a {\em generalized $k$-modulation}.

(v)~ For a generalized modulation $\mathcal{M}=(A_i,$$\;_iM_j)$ of a
valued graph $(\mathcal{G},\mathcal D)$, if all  $_iM_j$ are free as
$A_i$-$A_j$-bimodule for $i,j\in\mathcal G$, then
$\mathcal{M}=(A_i,$$\;_iM_j)$ is called a {\em regular
$k$-modulation}.

\end{definition}

Trivially, a regular $k$-modulation is a generalized modulation and
also a pre-modulation.

Note that  each $_iM_j$ is required to be finite generated and
$_iM_j\not=0$ (meanwhile $_jM_i\not=0$) if and only if there is an
edge between $i$ and $j$ in the (pseudo-)valued graph $(\mathcal{G}
,\mathcal{D})$.

\begin{example}\label{exam2.1}
(i)~ For a $k$-pseudo-modulation $\mathcal{M}=(F_i,$$\;_iM_j)$ of a
pseudo-valued graph $(\mathcal{G},\mathcal D)$, if each $F_i ~
(i\in\mathcal G)$ is a division $k$-algebra, then
$\mathcal{M}=(F_i,$$\;_iM_j)$ is just the modulation studied in
\cite{D}\cite{DR}.

 In fact, let $t_{ij}=rank_{F_i}$$_iM_{j F_j}$,
 $dim_kF_i=\varepsilon_i$, $dim_kF_j=\varepsilon_j$.
 Then, $d_{ij}=dim(_iM_j)_{F_j}=t_{ij}\varepsilon_i$,
 $d_{ji}=dim_{F_i}(_iM_j)=t_{ij}\varepsilon_j$. Thus,
 $d_{ij}\varepsilon_j=d_{ji}\varepsilon_i$, which means
  $(\mathcal{G},\mathcal D)$ is a valued graph.
 The condition
 $Hom_{F_i}(_iM_j, F_i)\cong Hom_{F_j}(_iM_j, F_j)$ as $F_j$-$F_i$-bimodules for
 any
  ${(i,j)\in \mathcal{G}\times\mathcal{G}}$ is ensured by Lemma 0.2 in \cite{D}.

   Hence, the classical modulation in \cite{D}\cite{DR} is a
   special class of regular modulations.

(ii)~   In particular, in (i), if $F_i~(i\in\mathcal G)$ are finite
extension fields of $k$, then $\mathcal{M}=(F_i,$$\;_iM_j)$
 is called a {\em $k$-species of the valued graph} $(\mathcal{G},\mathcal D)$.

(iii)~  Moreover, in (i), if the valued graph $(\mathcal{G},\mathcal
D)$ is given an orientation $\Omega$ and $F_i=k~(i\in\mathcal G)$,
the bimodule $_iM_j$ is only a $k$-linear space such that
$t_{ij}\stackrel{def}{=}d_{ij}=d_{ji}$ for any pair
$(i,j)\in\mathcal G\times\mathcal G$.
 Then the valued quiver $(\mathcal{G},\mathcal D, \Omega)$ degenerates to a
(non-valued) quiver $G=(G_0, G_1)$ whose arrow number from $i$ to
$j$ is just $t_{ij}$ if the pair $(i,j)$ is oriented from $i$ to
$j$. Thus, in this case, $T(\mathcal M)$ is just the path algebra
$kG$.

\end{example}

In order to introduce representations of a pseudo-modulation, the
pseudo-valued graph has to be given an orientation as below.

 Given a $k$-pseudo-modulation $\mathcal{M}=(A_i,$$\;_iM_j)$
over a pseudo-valued quiver $(\mathcal{G},\mathcal D, \Omega)$, we
define a {\em representation} of $\mathcal{M}$ to be an object
$\mathcal V=(V_i,\;_j\varphi_i)$,
 where to each vertex $i\in \mathcal{G}$ corresponds an
  $A_i$-module $V_i$ and to each oriented edge $i\rightarrow j$  corresponds an $A_j$-homomorphism
 $_j\varphi_i:\;V_i\otimes_{A_i}$$_iM_j\longrightarrow V_j$.
 If each $V_i$ is finitely generated as $A_i$-module, this
 representation $\mathcal
V=(V_i,\;_j\varphi_i)$ is said to be {\em finitely generated}.

In the above definition, in the case that $A_i=A_j=k$ and let
$dim_k$$_iM_j=t_{ij}$, then $d_{ij}=d_{ji}=t_{ij}$ and we get a
representation $\mathcal V=(V_i,\;_j\varphi_i)$ of the non-valued
quiver $G$ (that is, a representation of $kG$) with
$_j\varphi_i:\;V_i\longrightarrow V_j$.

A {\em morphism} $\alpha$ from a representation $\mathcal
V=(V_i,$$\;_j\varphi_i)$ to another representation $\mathcal
U=(U_i,$$\;_j\psi_i)$ consists of $A_i$-module homomorphisms
$\alpha_i:\;V_i\rightarrow U_i$ for all $i\in \mathcal{G}$
preserving the structure of the objects, that is, such that all
diagrams:
 \[
  \begin{diagram}
\node{V_i\otimes_{A_i\;i}M_j}\arrow[2]{e,t}{_j\varphi_i}
\arrow{s,r}{\alpha_i\otimes_{A_i}id_{_iM_j}}
\node[2]{V_j}\arrow{s,r}{\alpha_j}\\
\node{U_i\otimes_{A_i\;i}M_j}\arrow[2]{e,t}{_j\psi_i} \node[2]{U_j}
 \end{diagram}
\]
commute for each oriented edge $i\rightarrow j$.

Let $Rep(\mathcal{M})$ (resp. $rep(\mathcal{M})$) be the category
consisting of all (resp. finitely generated) representations of
$\mathcal{M}$.

For a $k$-pseudo-modulation $\mathcal{M}=(A_i,$$\;_iM_j)$ of a
pseudo-valued quiver $(\mathcal{G},\mathcal D,\Omega)$, we get the
tensor algebra $T(\mathcal{M})\stackrel{def}{=}T(A,M)$
 for $A=\oplus_{i\in \mathcal{G}}A_i$ and $M=\oplus_{(i,j)\in\mathcal G\times\mathcal G}$$_iM_j$, where $M$ is acted by $A$ as
 an $A$-$A$-bimodule through the projection maps
  $A\rightarrow A_i$
  for $i\in \mathcal{G}$.

Conversely, for a tensor algebra $T(A,M)$ with $A=\oplus_{i\in
I}A_i$, $M=\oplus_{(i,j)\in I\times I}$$_iM_j$ and subalgebras $A_i$
and $A_i$-$A_j$-bimodules $_iM_j$ ~($i,j\in I$), let
$d_{ij}=rank(_iM_j)_{A_j}$ and
 $d_{ji}=rank_{A_i}(_iM_j)$. Denote $\mathcal D=\{d_{ij}, d_{ji}:(i,j)\in I\times
 I\}$, $\mathcal G=I$. For any $_iM_j\not=0$, give an oriented edge
 from $i$ to $j$.
 Then we get a pseudo-valued quiver $(\mathcal G, \mathcal D,
 \Omega)$ and a $k$-pseudo-modulation $\mathcal M=(A_i,$$\;_iM_j)$.

We call such a tensor algebra as above  {\em an $\mathcal
A$-path-type tensor algebra}$^{\cite{Li}}$ on the pseudo-valued
quiver $(\mathcal G, \mathcal D,
 \Omega)$.

Therefore, we have
\begin{proposition}\label{eachother1}
Pseudo-modulations and  tensor algebras of $\mathcal A$-path-type
with finitely generated bimodules can be constructed one from
another in the way described above.
\end{proposition}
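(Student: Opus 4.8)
The plan is to prove Proposition \ref{eachother1} by making the two constructions explicit and checking they are mutually inverse. The statement asserts a correspondence between pseudo-modulations (with finitely generated bimodules) and $\mathcal A$-path-type tensor algebras, so the natural approach is to verify that passing from a pseudo-modulation to its tensor algebra and back recovers the original data, and symmetrically for the reverse direction. Both directions are already described in the preceding paragraphs of Section 3, so the proof is essentially a bookkeeping verification that nothing is lost or distorted by either passage.

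First I would start from a $k$-pseudo-modulation $\mathcal M=(A_i,\;_iM_j)$ over a pseudo-valued quiver $(\mathcal G,\mathcal D,\Omega)$ and form $T(\mathcal M)=T(A,M)$ with $A=\oplus_{i\in\mathcal G}A_i$ and $M=\oplus_{(i,j)}\;_iM_j$. I would then apply the reverse construction to this tensor algebra: the decomposition $A=\oplus A_i$ recovers exactly the algebras $A_i$ as the summands, and the decomposition $M=\oplus\;_iM_j$ recovers the bimodules, where the $A_i$-$A_j$-bimodule structure on the $(i,j)$-summand is precisely the one induced through the projections $A\to A_i$, $A\to A_j$ that were used to define the $A$-$A$-bimodule structure on $M$. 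The key point to note here is that the valuation data is preserved: by definition $d_{ij}=rank(_iM_j)_{A_j}$ and $d_{ji}=rank_{A_i}(_iM_j)$ in the pseudo-modulation, and these are exactly the ranks the reverse construction reads off from the summands of $M$. An oriented edge from $i$ to $j$ is placed precisely when $_iM_j\neq0$, which by Definition \ref{def2.1} matches the edges of the original pseudo-valued graph; hence the recovered quiver $(\mathcal G,\mathcal D,\Omega)$ coincides with the one we started from.

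Conversely, I would begin with an $\mathcal A$-path-type tensor algebra $T(A,M)$ equipped with its given decompositions $A=\oplus_{i\in I}A_i$ and $M=\oplus_{(i,j)}\;_iM_j$, and with finitely generated bimodules $_iM_j$. The reverse construction produces a pseudo-modulation $\mathcal M=(A_i,\;_iM_j)$; its own tensor algebra $T(\mathcal M)$ is built from $A=\oplus A_i$ and $M=\oplus\;_iM_j$, which are literally the data we started with. The finite-generation hypothesis is exactly what guarantees that $d_{ij}=rank(_iM_j)_{A_j}$ and $d_{ji}=rank_{A_i}(_iM_j)$ are well-defined non-negative integers (by the remark in {\bf 2.5} that the rank exists whenever $M$ is finitely generated), so the intermediate pseudo-valued graph is legitimately defined, and the multiplication on $T(\mathcal M)$ induced by the natural $A$-bilinear maps $M^i\times M^j\to M^{i+j}$ agrees with the original multiplication on $T(A,M)$.

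The main obstacle, such as it is, is not difficulty but precision in what ``constructed one from another'' should mean: the two passages are inverse to each other only after one fixes the chosen decompositions $A=\oplus A_i$ and $M=\oplus\;_iM_j$ as part of the data, since a tensor algebra does not canonically remember its grading by vertices. I would therefore emphasize that the correspondence is between pseudo-modulations and tensor algebras \emph{together with their prescribed decompositions}, and under that convention the round trips in both directions return the identical data. The symmetry condition distinguishing the various subclasses (pre-modulation, generalized modulation, regular modulation) plays no role here, since Proposition \ref{eachother1} concerns only the bare pseudo-modulation structure; checking that those extra conditions are also preserved, if desired, is an immediate corollary but is not needed for the present statement.
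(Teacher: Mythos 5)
Your proposal is correct and follows essentially the same route as the paper: the paper states Proposition \ref{eachother1} as an immediate consequence of the two constructions described just before it, and your argument simply makes explicit the round-trip verification that those constructions are mutually inverse (together with the sensible caveat that the tensor algebra must be taken with its prescribed decompositions $A=\oplus A_i$ and $M=\oplus\;_iM_j$ as part of the data). Nothing further is needed.
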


Clearly, representations of the classical modulations and their
morphisms in \cite D\cite{DR} are respectively the special cases of
that of pseudo-modulations and their morphisms given here.

As a generalization of Theorem \ref{thm2.1}, we have the following
result about $k$-pseudo-modulation:

 \begin{theorem}\label{th2.4}
Let $\mathcal{M}=(A_i,$$\;_iM_j)$ be a $k$-pseudo-modulation of a
pseudo-valued quiver $(\mathcal{G},\mathcal D,\Omega)$. Then the
category $Rep(\mathcal{M})$ (resp. $rep(\mathcal{M})$) of all (resp.
finitely generated) representations of $\mathcal{M}$ is equivalent
to the category $Mod_{T(\mathcal{M})}$ (resp.
$mod_{T(\mathcal{M})}$) of (resp. finitely generated) right
$T(\mathcal{M})$-modules.
 \end{theorem}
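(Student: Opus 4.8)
The plan is to construct an explicit equivalence of categories by building a functor in each direction and checking they are mutually quasi-inverse. The key observation is that $T(\mathcal M) = T(A,M)$ is a graded algebra whose degree-$0$ part is $A = \oplus_i A_i$ and whose degree-$1$ part is $M = \oplus_{(i,j)} {}_iM_j$, and that $A$ contains the orthogonal idempotents $e_i$ (the identities of the $A_i$). A right $T(\mathcal M)$-module $N$ therefore decomposes as $N = \oplus_i N e_i$, where each $Ne_i$ is naturally an $A_i$-module. First I would define the functor $\Phi : \mathrm{Mod}_{T(\mathcal M)} \to \mathrm{Rep}(\mathcal M)$ on objects by sending $N$ to the representation $\mathcal V = (V_i, {}_j\varphi_i)$ with $V_i = N e_i$, and with the structure maps ${}_j\varphi_i : V_i \otimes_{A_i} {}_iM_j \to V_j$ induced by the right multiplication action $N \times M \to N$ of the degree-$1$ part $M$ on $N$. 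The bimodule compatibility ($e_i$ acts on the left of ${}_iM_j$, $e_j$ on the right) is exactly what makes this land in $V_j = Ne_j$ and descend to the tensor product over $A_i$.

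In the reverse direction I would define $\Psi : \mathrm{Rep}(\mathcal M) \to \mathrm{Mod}_{T(\mathcal M)}$ by sending a representation $\mathcal V = (V_i, {}_j\varphi_i)$ to the abelian group $N = \oplus_i V_i$, and specifying the right $T(\mathcal M)$-action on generators: $A$ acts componentwise via its projections $A \to A_i$, and a degree-$1$ element $m \in {}_iM_j$ sends $v \in V_i$ to ${}_j\varphi_i(v \otimes m) \in V_j$ (and kills $V_\ell$ for $\ell \neq i$). The crucial point is that because $T(\mathcal M)$ is the tensor algebra, it is generated in degrees $0$ and $1$ with the only relations being the $A$-bilinearity of the tensor products $M^n = M \otimes_A \cdots \otimes_A M$; hence an action of $T(\mathcal M)$ on $N$ is determined freely by an $A$-action together with a compatible degree-$1$ action, and no further coherence conditions need to be imposed. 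This is precisely the universal property of the tensor algebra, and it guarantees $\Psi$ is well defined. On morphisms, $\Phi$ and $\Psi$ both act in the obvious way, and one checks that a family $\alpha_i : V_i \to U_i$ of $A_i$-maps commutes with the structure squares drawn in the excerpt if and only if $\oplus_i \alpha_i$ is $T(\mathcal M)$-linear.

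Next I would verify that $\Phi$ and $\Psi$ are mutually quasi-inverse. Starting from $N$, applying $\Phi$ then $\Psi$ reassembles $\oplus_i Ne_i = N(\sum_i e_i) = N$ since $\sum_i e_i = 1_A$ is the identity of $T(\mathcal M)$ in degree $0$; the recovered module structure agrees because the degree-$0$ and degree-$1$ actions were read off from, and then reinstated as, the original action, and these generate everything. Conversely, starting from $\mathcal V$, applying $\Psi$ then $\Phi$ returns $V_i$ as $(\oplus_\ell V_\ell)e_i$ and returns the original ${}_j\varphi_i$ as the induced degree-$1$ map. Both round trips give natural isomorphisms, establishing the equivalence $\mathrm{Rep}(\mathcal M) \simeq \mathrm{Mod}_{T(\mathcal M)}$. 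Finally I would check that $\Phi$ and $\Psi$ restrict to an equivalence of the full subcategories of finitely generated objects: a $T(\mathcal M)$-module $N$ is finitely generated over $T(\mathcal M)$ exactly when each $V_i = Ne_i$ is finitely generated over $A_i$, which is the definition of a finitely generated representation, so $\mathrm{rep}(\mathcal M) \simeq \mathrm{mod}_{T(\mathcal M)}$ follows.

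I expect the main obstacle to be the well-definedness of $\Psi$, that is, confirming rigorously that the data of a representation extends to a genuine right $T(\mathcal M)$-action on all of $\oplus_i V_i$, including the higher tensor degrees $M^n$. This is where the tensor-algebra structure does the real work: one must invoke the universal property that an $A$-bimodule map out of $M$ together with the $A$-module structure on $N$ extends uniquely and consistently to an action of $T(A,M)$, and verify that the $A_i$-linearity of each ${}_j\varphi_i$ over the tensor product $V_i \otimes_{A_i} {}_iM_j$ is precisely the compatibility required for the action of $M^{\otimes_A 2}$, $M^{\otimes_A 3}$, and so on, to be consistent. Since $T(\mathcal M)$ is a special case of the $\mathcal A$-path-type tensor algebra and this equivalence is the direct generalization of the module-theoretic description of representations of an ordinary modulation in Theorem \ref{thm2.1} (and of quiver representations as $kG$-modules), the argument parallels the classical proof, with the division algebras $F_i$ replaced by the artinian algebras $A_i$; the only care needed is that none of the classical steps secretly used that the $F_i$ are division algebras rather than general artinian algebras, which they do not.
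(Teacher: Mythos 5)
Your proposal is correct and follows essentially the same route as the paper's own proof: the paper's functor $G$ is your $\Phi$ (it sets $V_i=VA_i$, which coincides with your $Ne_i$), its functor $F$ is your $\Psi$ (with the $T(\mathcal M)$-action "determined inductively" where you invoke the universal property of the tensor algebra), and the verification that they are mutually quasi-inverse is the same. The one point where you assert more than is provable is the biconditional "$N$ is finitely generated over $T(\mathcal M)$ exactly when each $Ne_i$ is finitely generated over $A_i$" --- the "only if" direction can fail when $T(\mathcal M)$ is not finitely generated over $A$ (e.g.\ a single vertex with a loop gives $T(\mathcal M)\cong k[x]$) --- but the paper's proof does not address the finitely generated case at all, so this is a defect of the statement rather than of your argument.
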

\begin{proof} Let $\mathcal V=(V_i,\;_j\varphi_i)$ be a
 representation of $\mathcal{M}$. Define the
corresponding right $T(\mathcal M)$-module $V$ as follows.

Let $V=\oplus_{i\in \mathcal{G}}V_i$. Firstly, the right $A$-action
on $V$ is given via the projections $A\rightarrow A_i$ for $i\in
\mathcal{G}$, and then the
 right $M$-action on $V$ is defined by the $_j\varphi_i$, that is,
 for the oriented edge $i\rightarrow j$,
 $v_im_{ij}=$$_j\varphi_i(v_i\otimes m_{ij})$ for $v_i\in V_i$ and
 $m_{ij}\in$$_iM_j$,
  and moreover, extending by distributivity; finally, the $T(\mathcal{M})$-action on $V$ is determined inductively
  in a unique manner, by the $M$-action, that is, $$v_i(m_{ij}\otimes\cdots\otimes m_{pq}\otimes m_{qs})\;
  =\; _s\varphi_q((v_i(m_{ij}\otimes\cdots\otimes m_{pq}))\otimes m_{qs}).$$
Thus, $V$ becomes a $T(\mathcal{M})$-module.

And, if $\alpha$ is a morphism of representations from $\mathcal V$
to $\mathcal U$, then we can define the $T(\mathcal{M})$-module
morphism $\overline{\alpha}$
 from $V$ to $U$
 with $\overline\alpha(\oplus_{i\in \mathcal{G}}v_i)=\oplus_{i\in \mathcal{G}}\alpha_i(v_i)$. Thus, we get the functor
 $F:\; Rep(\mathcal{M})\rightarrow mod_{T(\mathcal{M})}$ with
  $F(\mathcal V)=V$ and $F(\alpha)=\overline\alpha$.
In fact, for $\alpha:\mathcal V\rightarrow\mathcal U$,
$\beta:\mathcal U\rightarrow\mathcal W$, we have
$\beta\cdot\alpha=\{\beta_i\alpha_i:\;i\in \mathcal{G}\}$, then
$F(\beta\cdot\alpha)=F(\beta)\cdot F(\alpha)$.

  Conversely, we can define the inverse functor $G$. Given $V\in Mod_{T(\mathcal{M})}$, let $V_i=VA_i$.
  Then $V=VA=\oplus_{i\in \mathcal{G}}VA_i=\oplus_{i\in \mathcal{G}}V_i$.
When there is an oriented edge $i\rightarrow j$, we have
$_iM_j\not=0$. In general,
$V_i$$\cdot_iM_j=VA_i$$\cdot_iM_j=V\cdot$$_iM_jA_j\subset VA_j=V_j$.
Then, we can induce the $A_j$-module morphisms
$_j\varphi_i:\;V_i\otimes_{A_i}$$_iM_j\rightarrow V_j$ under this
$M$-action. Thus, by the definition, $\mathcal
V=(V_i,\;_j\varphi_i)$ is a representation of $\mathcal{M}$, that
is, $\mathcal V\in Rep(\mathcal{M})$.

For $V,\; U\in Mod_{T(\mathcal{M})}$ and
$\overline\alpha:V\rightarrow U$ a $T(\mathcal{M})$-homomorphism,
let $\alpha_i=\overline\alpha|_{V_i}$. Then
$\alpha_i(V_i)=\alpha_i(VA_i) =\overline\alpha(V)A_i\subset
UA_i=U_i$. From the $T(\mathcal{M})$-linearity of $\overline\alpha$,
 the commutative diagram
\[
  \begin{diagram}
\node{V_i\otimes_{A_i\;i}M_j}\arrow[2]{e,t}{_j\varphi_i}
\arrow{s,r}{\alpha_i\otimes_{A_i}1_{_iM_j}}
\node[2]{V_j}\arrow{s,r}{\alpha_j}\\
\node{U_i\otimes_{A_i\;i}M_j}\arrow[2]{e,t}{_j\psi_i} \node[2]{U_j}
 \end{diagram}
\]
follows for each oriented edge $i\rightarrow j$, where $_j\psi_i$ is
defined as similarly as $_j\varphi_i$. So, $\alpha=\{\alpha_i:\;i\in
\mathcal{G}\}$ is a morphism from $\mathcal V$ to $\mathcal U$ in
$Rep(\mathcal M)$. Define the functor $G$ satisfying $G(V)=\mathcal
V$ and $G(\overline\alpha)=\alpha$. For
$\overline\alpha:V\rightarrow U$ and $\overline\beta:U\rightarrow
W$, it follows that $\overline\alpha=\oplus_{i\in
\mathcal{G}}\alpha_i$ and $\overline\beta=\oplus_{i\in
\mathcal{G}}\beta_i$. Then,
 $\overline\beta\cdot\overline\alpha=\oplus_{i\in \mathcal{G}}\beta_i\alpha_i$. Hence, $G(\overline\beta\cdot\overline\alpha)
 =\{\beta_i\alpha_i:\;i\in \mathcal{G}\}
 =\beta\cdot\alpha=G(\overline\beta)\cdot G(\overline\alpha)$.

Obviously, $F$ and $G$ are mutual-inverse equivalence functors
between $Rep(\mathcal{M})$ and $Mod_{T(\mathcal{M})}$.
\end{proof}

In \cite{DK}, it was proved that for a finite dimensional algebra
$A$ with radical $r$, if the quotient algebra $A/r$ is separable,
then $A$ is isomorphic to a quotient algebra of  $T(A/r, r/r^2)$ by
an admissible ideal $I$, that is, $J^s\subset I\subset J^2$ for a
positive integer $s$.

Moreover, if this algebra $A$ is hereditary, then $I=0$ such that
$A\cong T(A/r, r/r^2)$. Let $A/r=\oplus_{i=1}^sA_i$ where $A_i$ are
simple ideals of $A/r$. Then, $r/r^2$ is an $A/r$-$A/r$-bimodule
with natural left and right module actions.
 Let an
 $A_i$-$A_j$-bimodule $_iM_j=A_ir/r^2A_j$ for any $i,j=1,\cdots s$. By Proposition \ref{eachother1}, the
 corresponding pseudo-modulation $\mathcal M=(A_i,$$\;_iM_j)$ of a pseudo-valued quiver $(\mathcal G, \mathcal D,
 \Omega)$ can be constructed
 from this tensor algebra $T(A/r, r/r^2)$, which is called the {\em related pseudo-modulation} of the finite dimensional hereditary
  $A$. Therefore, by Theorem \ref{th2.4}, we can state
 that
 \begin{corollary}\label{cor2.4}
For a finite dimensional hereditary algebra $A$ with radical $r$ and
its related pseudo-modulation $\mathcal{M}=(A_i,$$\;_iM_j)$ of
pseudo-valued quiver $(\mathcal{G},\mathcal D,\Omega)$, if $A/r$ is
 separable, then
  the (resp. finitely generated) representation category $Rep(\mathcal{M})$ (resp. $rep(\mathcal{M})$)
  is equivalent to the (resp. finitely generated) module category $Mod_A$ (resp.
$mod_A$).

 \end{corollary}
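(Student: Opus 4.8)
The plan is to reduce the statement to Theorem \ref{th2.4} by identifying the tensor algebra $T(\mathcal{M})$ of the related pseudo-modulation with $T(A/r, r/r^2)$, and then invoking the quoted theorem of \cite{DK} to recognize the latter as $A$ itself. Concretely, the chain of equivalences I aim to establish is
\[
Rep(\mathcal{M}) \;\simeq\; Mod_{T(\mathcal{M})} \;\simeq\; Mod_{T(A/r,\,r/r^2)} \;\simeq\; Mod_A,
\]
together with the analogous chain for the finitely generated subcategories.

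First I would pin down the algebra $T(\mathcal{M})$. By Definition \ref{def2.1} and the construction preceding the statement, $T(\mathcal{M}) = T(\tilde A, M)$ with $\tilde A = \oplus_{i=1}^s A_i$ and $M = \oplus_{(i,j)}{}_iM_j$, where ${}_iM_j = A_i(r/r^2)A_j$. Since the $A_i$ are the simple ideals of the semisimple algebra $A/r$, we have $\tilde A = A/r$; writing $e_i$ for the identity of $A_i$ (an orthogonal central idempotent with $\sum_i e_i = 1$ in $A/r$), the Peirce decomposition of the $A/r$-$A/r$-bimodule $r/r^2$ gives
\[
r/r^2 \;=\; \bigoplus_{i,j} e_i(r/r^2)e_j \;=\; \bigoplus_{(i,j)} {}_iM_j \;=\; M.
\]
Because the multiplication on $T(\tilde A, M)$ is induced by the $\tilde A$-bilinear maps $M^{\otimes m}\times M^{\otimes n}\to M^{\otimes(m+n)}$, which are precisely the structure maps of $T(A/r, r/r^2)$, this identification of bimodules upgrades to an isomorphism of $k$-algebras $T(\mathcal{M}) \cong T(A/r, r/r^2)$. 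This step is essentially forced by Proposition \ref{eachother1}, which guarantees that the related pseudo-modulation and the tensor algebra $T(A/r, r/r^2)$ determine each other.

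Next I would invoke the hypotheses. Since $A$ is hereditary and $A/r$ is separable, the result of \cite{DK} recalled above yields $A \cong T(A/r, r/r^2)$ (the admissible ideal $I$ vanishes in the hereditary case). Combining with the previous paragraph gives $T(\mathcal{M}) \cong A$, hence $Mod_{T(\mathcal{M})} \cong Mod_A$ and $mod_{T(\mathcal{M})} \cong mod_A$. Finally, Theorem \ref{th2.4} supplies the equivalences $Rep(\mathcal{M}) \simeq Mod_{T(\mathcal{M})}$ and $rep(\mathcal{M}) \simeq mod_{T(\mathcal{M})}$, and chaining these with the module-category equivalences above delivers the desired equivalences $Rep(\mathcal{M}) \simeq Mod_A$ and $rep(\mathcal{M}) \simeq mod_A$.

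The genuinely substantive inputs are imported rather than proved here: the equivalence of Theorem \ref{th2.4} and the structure theorem of \cite{DK}. The only work internal to the argument is the bimodule bookkeeping in the middle paragraph, and the one point I would watch is the verification that $M = \oplus_{(i,j)} A_i(r/r^2)A_j$ reconstitutes all of $r/r^2$ with no cross terms lost; this is exactly the completeness $\sum_i e_i = 1$ of the idempotent system, so I expect it to be routine rather than a real obstacle.
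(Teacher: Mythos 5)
Your proposal is correct and follows essentially the same route as the paper: the paper's justification (given in the discussion immediately preceding the corollary) is precisely the chain $A\cong T(A/r,r/r^2)=T(\mathcal{M})$ via \cite{DK} and Proposition \ref{eachother1}, followed by an application of Theorem \ref{th2.4}. The only difference is that you spell out the Peirce-decomposition check $r/r^2=\oplus_{i,j}A_i(r/r^2)A_j$, which the paper leaves implicit.
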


\section{A kind of generalized $k$-modulations}

By the Wedderburn-Artin Theorem, the center of a semisimple algebra
$A$ over an algebraically closed field $k$ is just the field $k$.
Define $\mu: A\rightarrow End_{k}(A)$ with $\mu(a) =\rho_a$ where
$\rho_a$ is the right translation on $A$ by the right multiplication
of $a$. Obviously, $\rho_a\in End_{k}(A)$. It is easy to check that
$\mu$ is a monomorphism of algebras.

Define $t: A\rightarrow k$ with $t(a)=tr(\mu(a))$. Then, $t$ is the
character of the right regular representation of $A$ satisfying that
$t(ab)=t(ba)$ for any $a,b\in A$. In fact, trivially,  $t$ is
$k$-linear and $t(ab)=tr(\mu(ab))=tr(\mu(a)\mu(b))
=tr(\mu(b)\mu(a))= t(ba)$.
\begin{lemma}\label{lem3.1}
Assume $A$ is a finite-dimensional simple $k$-algebra with $k$
algebraically closed whose characteristic char$k\nmid
\sqrt{dim_{k}A}$. For any $a\not=0$ in $A$, there holds that
$t(aA)\not=0$.
\end{lemma}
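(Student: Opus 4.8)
The plan is to reduce everything to an explicit matrix computation using the Wedderburn--Artin structure of $A$. Since $A$ is a finite-dimensional simple $k$-algebra and $k$ is algebraically closed, Wedderburn--Artin gives an isomorphism $A\cong M_n(k)$ for some $n$, and then $\dim_k A=n^2$, so that $n=\sqrt{\dim_k A}$ is precisely the integer appearing in the divisibility hypothesis. Fixing this identification, I want to compute the linear functional $t$ concretely on matrices and thereby identify it with a scalar multiple of the ordinary matrix trace, which I will denote $\mathrm{Tr}$ to distinguish it from the trace $tr$ of a $k$-endomorphism of $A$ used in the definition $t(a)=tr(\mu(a))=tr(\rho_a)$.

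The first step is to compute $t$ in the matrix-unit basis $\{E_{ij}\}_{i,j=1}^n$ of $A=M_n(k)$. For $a=(a_{rs})\in A$ one has $\rho_a(E_{ij})=E_{ij}a=\sum_{q} a_{jq}E_{iq}$, so the coefficient of $E_{ij}$ in $\rho_a(E_{ij})$ is the term $q=j$, namely $a_{jj}$. Summing these diagonal contributions over the whole basis yields $t(a)=tr(\rho_a)=\sum_{i=1}^n\sum_{j=1}^n a_{jj}=n\,\mathrm{Tr}(a)$. Thus on $A\cong M_n(k)$ the functional $t$ is exactly $n$ times the usual matrix trace.

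Now the hypothesis $\mathrm{char}\,k\nmid n$ is used to guarantee that the scalar $n$ is nonzero in $k$, so that $t=n\,\mathrm{Tr}$ is a \emph{nonzero} scalar multiple of $\mathrm{Tr}$. It then suffices to invoke the non-degeneracy of the trace form on $M_n(k)$: given $a\neq 0$, choose indices $p,q$ with $a_{pq}\neq 0$ and take $b=E_{qp}\in A$. A direct computation gives $(ab)_{ii}=a_{iq}\delta_{pi}$, hence $\mathrm{Tr}(ab)=a_{pq}\neq 0$, and therefore $t(ab)=n\,a_{pq}\neq 0$. Since $ab\in aA$, this shows $t(aA)\neq 0$, as required.

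The only genuinely delicate point is the role of the characteristic condition, so I would state explicitly where it enters rather than treat it as automatic: the computation $t=n\,\mathrm{Tr}$ holds over any algebraically closed $k$, but the conclusion $t(aA)\neq 0$ fails completely when $\mathrm{char}\,k\mid n$, since then $n=0$ in $k$ forces $t\equiv 0$ on all of $A$. Thus the entire content of the lemma rests on the scalar $n=\sqrt{\dim_k A}$ surviving as a unit in $k$; everything else is the standard non-degeneracy of the matrix trace pairing. I expect no real obstacle beyond bookkeeping once the identification $t=n\,\mathrm{Tr}$ is in hand.
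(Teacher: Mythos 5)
Your proof is correct and follows essentially the same route as the paper: reduce to $A\cong M_n(k)$ by Wedderburn--Artin, compute $tr(\rho_a)$ in the matrix-unit basis, and use char$\,k\nmid n$. The only (welcome) refinement is that you identify $t=n\,\mathrm{Tr}$ globally and invoke non-degeneracy of the trace pairing, whereas the paper instead exhibits the specific element $E_{i_1i_1}\in aA$ after describing $aA$ as a right ideal supported on certain rows; your version avoids that (slightly informal) normalization of $aA$.
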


\begin{proof} Thanks to the
Wedderburn-Artin theorem, $A\cong M_n(k)$ the $n\times n$ full
matrix algebra over $k$ for $n=\sqrt{dim_{k}A}$. For simply, we
think $a$ is a non-zero $n\times n$ matrix and as right ideal of
$A$, $aA\not=0$ consists of all $n\times n$ matrices over $k$ whose
all rows are $0$ except for some $i_{1},\, i_{2},\,\cdots
\,i_{s}$-rows. Choose a matrix $X=E_{i_{1}i_{1}}$ in $aA$ with the
element $1$ in position $(i_{1},i_{1})$ and $0$ in all other
positions. Then under the $k$-basis $\{E_{ij}\}_{i,j=1}^{n}$ of $A$,
$t(X)=tr\mu(X)=n\cdot 1\neq 0$ since char$k\nmid n$. Therefore, we
have $t(aA)\not=0$.
\end{proof}

\begin{lemma}\label{prop2.2} Let $A$ and $B$ be
finite-dimensional simple $k$-algebras  with $k$ algebraically
closed whose characteristic char$k\nmid \sqrt{dim_{k}Adim_{k}B}$.
Then, for an $A$-$B$-bimodule $M$,
 $Hom_A(M,A)\cong Hom_B(M,B)$ as $B$-$A$-bimodules.
\end{lemma}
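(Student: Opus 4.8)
The plan is to route both one-sided Hom-spaces through the common $k$-dual $M^{*}=Hom_k(M,k)$, which carries a natural $B$-$A$-bimodule structure: for $\xi\in M^{*}$ put $(b\cdot\xi)(m)=\xi(mb)$ and $(\xi\cdot a)(m)=\xi(am)$. First I would record that the characteristic hypothesis is exactly what lets me apply Lemma \ref{lem3.1} to \emph{both} algebras. By the Wedderburn-Artin theorem $A\cong M_m(k)$ and $B\cong M_n(k)$ with $m=\sqrt{dim_k A}$ and $n=\sqrt{dim_k B}$, so $\sqrt{dim_k A\, dim_k B}=mn$; hence char$\,k\nmid mn$ forces char$\,k\nmid m$ and char$\,k\nmid n$. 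Consequently the right-regular characters $t_A$ and $t_B$ are each nondegenerate in the sense of Lemma \ref{lem3.1}, i.e. $t_A(yA)\neq0$ for every $y\neq0$ in $A$, and similarly for $B$.

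The core of the argument is to define $\Theta_A:Hom_A(M,A)\to M^{*}$ by $\Theta_A(f)=t_A\circ f$ and $\Theta_B:Hom_B(M,B)\to M^{*}$ by $\Theta_B(g)=t_B\circ g$, and to show each is a $B$-$A$-bimodule isomorphism. Here $Hom_A(M,A)$ is the $B$-$A$-bimodule with $(b\cdot f)(m)=f(mb)$ and $(f\cdot a)(m)=f(m)a$, while $Hom_B(M,B)$ has $(b\cdot g)(m)=bg(m)$ and $(g\cdot a)(m)=g(am)$. The bimodule-homomorphism property is then a direct check: the right-$A$-compatibility of $\Theta_A$, for instance, reduces to $t_A(f(m)a)=t_A(af(m))$, which is the trace identity $t_A(xy)=t_A(yx)$ established before Lemma \ref{lem3.1}; the left-$B$-compatibility follows from the defining action on $M$, and the verifications for $\Theta_B$ are symmetric, again using $t_B(xy)=t_B(yx)$.

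For injectivity of $\Theta_A$, suppose $t_A(f(m))=0$ for all $m$. Then for every $a\in A$ we get $t_A(a\,f(m))=t_A(f(am))=0$ by left $A$-linearity of $f$, so nondegeneracy of $t_A$ (Lemma \ref{lem3.1}) forces $f(m)=0$ for all $m$, whence $f=0$; the same reasoning applied to $t_B$ gives injectivity of $\Theta_B$. To upgrade injectivity to bijectivity I would count dimensions: writing $S$ for the unique simple left $A$-module, one has $A\cong S^{m}$ and $M\cong S^{\ell}$ as left $A$-modules, and Schur's lemma over the algebraically closed field $k$ gives $dim_k Hom_A(M,A)=m\ell=dim_k M=dim_k M^{*}$, and symmetrically for $B$ on the right. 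Thus $\Theta_A$ and $\Theta_B$ are isomorphisms of $B$-$A$-bimodules, and the composite $\Theta_B^{-1}\circ\Theta_A$ is the desired isomorphism $Hom_A(M,A)\cong Hom_B(M,B)$.

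I expect the main obstacle to be conceptual rather than computational: the decisive step is recognizing that the trace forms supply canonical maps from each one-sided Hom-space into the \emph{shared} $k$-dual $M^{*}$, so that Lemma \ref{lem3.1} can be invoked on the $A$-side and the $B$-side independently. Once this bridge is in place, injectivity is immediate from nondegeneracy, bijectivity from the dimension count, and the bimodule compatibilities are routine consequences of $t(xy)=t(yx)$; the only point requiring care is keeping the left/right actions on the three bimodules consistently aligned so that $\Theta_A$ and $\Theta_B$ genuinely land in the same $B$-$A$-bimodule $M^{*}$.
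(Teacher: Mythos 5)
Your proof is correct and follows essentially the same route as the paper's: both identify $Hom_A(M,A)$ and $Hom_B(M,B)$ with the common $k$-dual $Hom_k(M,k)$ by means of the regular-representation trace, with Lemma \ref{lem3.1} supplying nondegeneracy and a dimension count supplying surjectivity. The only cosmetic difference is that the paper first establishes $_AA_A\cong Hom_k(A,k)$ via $a\mapsto at$ and then applies tensor--hom adjunction, whereas you write the resulting composite map $f\mapsto t_A\circ f$ directly and verify the bimodule compatibilities by hand.
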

\begin{proof}   Firstly, we prove $Hom_A(M,\;_AA_A)\cong Hom_k(M,k)$
as $B$-$A$-bimodules, where $Hom_k(M,k)$ consists of all
$k$-homomorphism with the bimodule structure defined by $(b\psi
a)(m)=\psi(amb)$ for $a\in A$, $b\in B$, $m\in M$, $\psi\in
Hom_k(M,k)$.

Indeed, for $b_1$, $b_2\in B$, $m\in M$,
$$((b_1b_2)\psi)(m)=\psi(mb_1b_2)=\psi((mb_1)b_2)=(b_2\psi)(mb_1)=(b_1(b_2\psi))(m),$$
then $(b_1b_2)\psi=b_1(b_2\psi)$, and similarly, for $a_1, a_2\in
A$, $\psi(a_1a_2)=(\psi a_1)a_2$.

Now, define the map $\tau: A\rightarrow Hom_k(A,k)$ by $\tau(a)=at$
for $a\in A$, where $at\in Hom_k(A,k)$ by $(at)(x)=t(ax)$ for $x\in
A$. Obviously, $\tau$ is $k$-linear.

Moreover, $\tau$ is injective. In fact, for $a\in ker\tau$, it means
that for any $x\in A$, $(at)(x)=0$, then $t(ax)=0$, or say,
$aA\subset kert$ for the right ideal $aA$ of $A$, which is
equivalent to $t(aA)=tr\mu(aA)=0$. Thus,  $a=0$ according to Lemma
\ref{lem3.1}. Hence, $ker\tau=0$.

Since $dim_kA=dim_kHom_k(A,k)$ are finite, we obtain that $\tau$ is
a $k$-linear isomorphism.

Similarly, define $ta\in Hom_k(A,k)$ by $(ta)(x)=t(xa)$ for $x\in
A$. Since $t(ax)=t(xa)$ for any $a,x\in A$, we get $\tau(a)=at=ta$.
Naturally, it follows that $_AA_A\stackrel{\tau}\cong Hom_k(A,k)$ as
$A$-$A$-bimodules. Consequently, $$Hom_A(M, ~_AA_A)\cong Hom_A(M,
Hom_k(A,k))\cong Hom_k(A\otimes_AM, k)\cong Hom_k(M, k)$$ as
required as $B$-$A$-bimodules.

Similarly, $Hom_B(M,$ $_BB_B)\cong Hom_k(M, k)$ holds as
$B$-$A$-bimodules. Therefore, we have $Hom_A(M,$ $_AA_A)\cong
Hom_B(M,$ $_BB_B)$.
\end{proof}
This lemma is an improvement of Lemma 0.2 in \cite{D}.

Trivially, the condition in Lemma \ref{prop2.2} is always satisfied
if the field $k$ is algebraically closed of characteristic $0$.
\begin{lemma}\label{lem4.3}
Let $A$ and $B$ be finite-dimensional semisimple algebras over  $k$
algebraically closed of characteristic
 $0$. Then,
for an $A$-$B$-bimodule $M$, $Hom_A(M,A)\cong Hom_B(M,B)$ holds as
$B$-$A$-bimodules.
\end{lemma}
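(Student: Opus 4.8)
The statement generalizes from simple to semisimple algebras, so the natural strategy is to decompose $A$ and $B$ into their simple components via Wedderburn--Artin and track how the bimodule $M$ and the two $\mathrm{Hom}$-functors split along this decomposition. Concretely, write $A=\bigoplus_{p=1}^{m}A_p$ and $B=\bigoplus_{q=1}^{n}B_q$ as direct sums of simple ideals, with central idempotents $e_p\in A$ and $f_q\in B$ satisfying $\sum_p e_p=1_A$, $\sum_q f_q=1_B$. Each $A_p$ and each $B_q$ is a finite-dimensional simple $k$-algebra over the algebraically closed field $k$, and since $\mathrm{char}\,k=0$ the divisibility hypothesis $\mathrm{char}\,k\nmid\sqrt{\dim_k A_p\,\dim_k B_q}$ of Lemma \ref{prop2.2} holds automatically (as noted in the remark following that lemma).

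First I would decompose the bimodule as $M=\bigoplus_{p,q} M_{pq}$, where $M_{pq}=e_p M f_q$ is an $A_p$-$B_q$-bimodule. Because the $e_p$ and $f_q$ are central orthogonal idempotents, any $A$-linear (resp. $B$-linear) map out of $M$ respects this decomposition, so I would establish the natural isomorphisms
\[
\mathrm{Hom}_A(M,A)\;\cong\;\bigoplus_{p,q}\mathrm{Hom}_{A_p}(M_{pq},A_p),
\qquad
\mathrm{Hom}_B(M,B)\;\cong\;\bigoplus_{p,q}\mathrm{Hom}_{B_q}(M_{pq},B_q),
\]
as $B$-$A$-bimodules. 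The point here is that an $A$-homomorphism $M\to A$ is forced to send $M_{pq}=e_p M f_q$ into $e_p A=A_p$ and to vanish on the cross-terms, and symmetrically on the $B$-side; this is the routine idempotent bookkeeping that I would not grind through in detail.

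Next, to each summand I would apply Lemma \ref{prop2.2} to the simple algebras $A_p$, $B_q$ and the bimodule $M_{pq}$, obtaining $\mathrm{Hom}_{A_p}(M_{pq},A_p)\cong\mathrm{Hom}_{B_q}(M_{pq},B_q)$ as $B_q$-$A_p$-bimodules. Summing over all pairs $(p,q)$ and reassembling via the two displayed decompositions yields $\mathrm{Hom}_A(M,A)\cong\mathrm{Hom}_B(M,B)$ as $B$-$A$-bimodules, which is the desired conclusion.

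The main obstacle I anticipate is purely bookkeeping rather than conceptual: I must verify that the simple-component isomorphisms from Lemma \ref{prop2.2}, which are a priori only $B_q$-$A_p$-bimodule maps on each block, glue together into a single \emph{global} $B$-$A$-bimodule isomorphism. This requires checking that the $B$-action (resp. $A$-action) on the full $\mathrm{Hom}$-space is block-diagonal with respect to the decomposition --- equivalently, that $f_{q'}$ acts as zero on the $(p,q)$-summand for $q'\neq q$ and similarly for the $A$-side --- so that no off-diagonal compatibility conditions arise. Since the idempotents are central, this holds, but it is the step where one must be careful about the directions of the actions (recall $\mathrm{Hom}_A(M,A)$ carries its left $B$-action through $M$ and its right $A$-action through the target $A$). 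Once this compatibility is confirmed, the proof is complete.
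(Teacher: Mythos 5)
Your proposal is correct and follows essentially the same route as the paper: the paper's proof likewise decomposes $M$ into the blocks $A_iMB_j$ (your $e_pMf_q$), identifies $Hom_A(M,A)$ and $Hom_B(M,B)$ with the direct sums of the blockwise Hom-spaces, and applies Lemma \ref{prop2.2} to each simple pair $(A_i,B_j)$ before reassembling. The compatibility of the $B$-$A$-bimodule structures across blocks, which you flag as the point needing care, is exactly the content the paper's chain of isomorphisms passes through implicitly.
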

\begin{proof}
Let $A=\oplus_{i=1}^sA_i$, $B=\oplus_{j=1}^tB_j$ with simple ideals
$A_i$ and $B_j$. Then,
\begin{eqnarray*}
Hom_A(M,A)&\cong&\oplus_{i=1}^sHom_A(A_iM,A_i)\\
&\cong&\oplus_{i=1}^sHom_{A_i}(A_iM,A_i)\\
&\cong&\oplus_{i=1}^s\oplus_{j=1}^tHom_{A_i}(A_iMB_j,A_i)\\
&\cong&\oplus_{i=1}^s\oplus_{j=1}^tHom_{B_j}(A_iMB_j,B_j)\\
&\cong&\oplus_{j=1}^tHom_{B_j}(AMB_j,B_j)\\
 &\cong& Hom_B(M,B).
\end{eqnarray*}
\end{proof}
 Using Lemma \ref{lem4.3} to  $A_i$ and $_iM_j$ below, by Definition \ref{def2.1}, we
 obtain:
\begin{theorem}\label{prop2.3}  $\mathcal{M}=(A_i,$$\;_iM_j)$ be a
pseudo-modulation of a pseudo-valued graph $(\mathcal{G},\mathcal
D)$ over an algebraically closed filed $k$ of characteristic $0$. If
all $A_i~ (i\in \mathcal{G})$ are (semi-)simple algebras, then
$\mathcal{M}=(A_i,$$\;_iM_j)$ is a (semi-)normal generalized
modulation.
\end{theorem}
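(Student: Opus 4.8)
The plan is to verify the two defining conditions of a (semi-)normal generalized modulation directly from Definition \ref{def2.1}, namely condition (ii) for (semi-)normality and condition (iv), the key isomorphism \eqref{keyequ}. The hypothesis that all $A_i$ are (semi-)simple is precisely condition (ii), so (semi-)normality is immediate and requires no argument. The entire content of the theorem is therefore the verification of \eqref{keyequ}, that is, showing that
\[
Hom_{A_i}(_iM_j, A_i)\cong Hom_{A_j}(_iM_j, A_j)
\]
as $A_j$-$A_i$-bimodules for every pair $(i,j)\in\mathcal{G}\times\mathcal{G}$.

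The key observation is that this is exactly the conclusion of Lemma \ref{lem4.3} once we instantiate its hypotheses correctly. First I would fix an arbitrary pair $(i,j)$ and set $A:=A_i$, $B:=A_j$, and $M:=\,_iM_j$. By hypothesis $k$ is algebraically closed of characteristic $0$, and by the (semi-)simplicity assumption each $A_i$ is a finite-dimensional semisimple $k$-algebra (it is artinian by the pseudo-modulation axioms and semisimple by the stated hypothesis), so both $A$ and $B$ satisfy the hypotheses of Lemma \ref{lem4.3}. The bimodule $_iM_j$ is by definition an $A_i$-$A_j$-bimodule, i.e. an $A$-$B$-bimodule. Thus Lemma \ref{lem4.3} applies verbatim and yields the required isomorphism of $B$-$A$-bimodules, i.e. of $A_j$-$A_i$-bimodules, which is \eqref{keyequ}.

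Having established \eqref{keyequ} for every pair $(i,j)$ together with the (semi-)normality already noted, the two conditions defining a (semi-)normal generalized modulation are both met, so $\mathcal{M}=(A_i,\,_iM_j)$ is a (semi-)normal generalized modulation, completing the proof. The only point requiring a little care — and the step I would flag as the main (though minor) obstacle — is the verification that each $A_i$ genuinely meets the hypotheses of Lemma \ref{lem4.3}: one must confirm that ``semisimple'' in the sense required there covers both the simple and semisimple cases uniformly (so that the single lemma handles both the ``normal'' and ``semi-normal'' variants of the statement at once), and that the finite-dimensionality over $k$ follows from the artinian hypothesis in Definition \ref{def2.1}(i) together with semisimplicity. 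Since Lemma \ref{lem4.3} is stated for semisimple algebras and simple algebras are a special case, a single invocation dispatches both parenthetical alternatives simultaneously, so no separate argument for the ``normal'' case is needed.
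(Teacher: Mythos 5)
Your proof is correct and is essentially the paper's own argument: the paper's entire ``proof'' is the single sentence ``Using Lemma \ref{lem4.3} to $A_i$ and ${}_iM_j$ below, by Definition \ref{def2.1}, we obtain,'' i.e.\ exactly the instantiation $A=A_i$, $B=A_j$, $M={}_iM_j$ that you spell out, with (semi-)normality read off directly from the hypothesis. The one caveat you flag --- that ``artinian $+$ semisimple'' does not by itself force finite $k$-dimension (e.g.\ $k(x)$ is a simple artinian $k$-algebra of infinite dimension), which Lemma \ref{lem4.3} genuinely needs --- is an implicit assumption you share with the paper, so it is not a deviation from its route.
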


From this theorem and its proof, we see that the condition
(\ref{keyequ}) in Definition \ref{def2.1}, which is required by the
definition of the classical modulation in Section 2, is not always
true for
 pseudo-modulations.

 For a pseudo-modulation, its
 pseudo-valued quiver is an analogue of {\em natural quiver}  of its corresponding tensor algebra of
 $\mathcal A$-path type, as similar as that of a generalized path algebra, see Section 5 and Second 7.

\section{Pre-modulations and generalized path algebras}

In this part, we give some pre-modulations and their applications to
generalized path algebras and artinian algebras.

As an generalization of path algebras, in
\cite{Li}\cite{LC}\cite{LL}\cite{LW}, normal generalized path
algebras are used to characterize the structures and representations
of artinian algebras via the method of natural quivers. This is
unlike to the classical method depending upon the corresponding
basic algebras.

In \cite D\cite{DR}, $k$-representation types of valued quivers are
classified through the corresponding relations between valued
quivers and $k$-modulations. In the sequeal, we will see that the
corresponding relationship  still  holds between (semi-)normal
generalized path algebras and (semi-)normal regular $k$-modulations.

\begin{lemma}\label{le2.6}
For a generalized path algebra $k(Q,\mathcal A)$ and $M$, $_iM_j$
defined as in (\ref{mm1}), (\ref{mm2}), let $\varepsilon_i=dim_kA_i$
and $d_{ij}=rank(_iM_j)_{A_j}$, $d_{ji}=rank_{A_i}(_iM_j)$ for all
$i,j\in Q_0$. Then, $d_{ij}\varepsilon_j=d_{ji}\varepsilon_{i}$ for
any $i,j\in Q_0$.
\end{lemma}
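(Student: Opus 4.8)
The plan is to reduce the whole identity to a $k$-dimension count of the free bimodule $_iM_j=A_i\Omega(i,j)A_j$. Write $r_{ij}=|\Omega(i,j)|$ for the number of arrows from $i$ to $j$; by construction this is the rank of $_iM_j$ as a free $A_i$-$A_j$-bimodule, and since $_iM_j$ is the free right $A_i\otimes_k A_j^{op}$-module over $\Omega(i,j)$, one has $\dim_k(_iM_j)=r_{ij}\,\varepsilon_i\varepsilon_j$.

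First I would invoke the observation recorded in 2.5: because $_iM_j$ is free over $A_i$-$A_j$ with basis $\Omega(i,j)$ and $A_j$ has $k$-dimension $\varepsilon_j$, it is a free left $A_i$-module with basis $\{mb:\ m\in\Omega(i,j),\ b\ \text{in a fixed }k\text{-basis of }A_j\}$, a set of cardinality $r_{ij}\varepsilon_j$. Symmetrically, using a $k$-basis of $A_i$ of size $\varepsilon_i$, it is a free right $A_j$-module with basis of cardinality $\varepsilon_i r_{ij}$.

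Next I would match these basis cardinalities with the ranks $d_{ji}=rank_{A_i}(_iM_j)$ and $d_{ij}=rank(_iM_j)_{A_j}$. The one point needing care is that rank is defined as the \emph{minimal number of generators}, so I must verify it coincides with the basis size of a free module; this is exactly where finite-dimensionality of the $A_i$ over $k$ is used. Indeed, if a free right $A_j$-module with basis of size $n$ is generated by $m$ elements, it is a quotient of $A_j^{\,m}$, so comparing $k$-dimensions gives $n\varepsilon_j=\dim_k(_iM_j)\le m\varepsilon_j$, forcing $n\le m$; hence the minimal number of right $A_j$-generators is exactly $n=\varepsilon_i r_{ij}$, i.e. $d_{ij}=\varepsilon_i r_{ij}$. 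The left-module version gives $d_{ji}=r_{ij}\varepsilon_j$ in the same way.

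Finally the desired equality follows by direct substitution: $d_{ij}\varepsilon_j=\varepsilon_i r_{ij}\varepsilon_j=d_{ji}\varepsilon_i$. The only genuine obstacle is the IBN-type step identifying rank with basis cardinality, which rests on the finite $k$-dimensionality of the $A_i$; everything else is bookkeeping with the free bimodule structure of $_iM_j$.
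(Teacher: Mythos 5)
Your proof is correct and follows essentially the same route as the paper's: expand the free bimodule basis $\Omega(i,j)$ against $k$-bases of $A_i$ and $A_j$ to exhibit a right $A_j$-basis of size $\varepsilon_i r_{ij}$ and a left $A_i$-basis of size $r_{ij}\varepsilon_j$, then cancel $r_{ij}$. Your explicit dimension-count verification that the minimal number of generators of a free module over a finite-dimensional algebra equals its basis cardinality is a step the paper's proof simply takes for granted, so your write-up is if anything slightly more complete.
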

\begin{proof}
 Let
$\{m_l\}_{l\in\Lambda}$ be an $A_i$-$A_j$-basis of  $_iM_j$ as free
$A_i$-$A_j$-bimodule. Let $\{a_s\}_{s\in \Phi}$ and
$\{b_t\}_{t\in\Psi}$ are respectively $k$-bases of $A_i$ and $A_j$.
Then, $_iM_j$ is right $A_j$-free and left $A_i$-free with
$A_j$-basis $\{a_sm_l\}_{s\in\Phi,l\in\Lambda}$ and $A_i$-basis
$\{m_lb_t\}_{l\in\Lambda,t\in\Psi}$ respectively. Thus,
$|\Phi|=\varepsilon_i$, $|\Psi|=\varepsilon_j$, and
$|\Phi||\Lambda|=d_{ij}$, $|\Lambda||\Psi|=d_{ji}$. So,
$|\Lambda|=d_{ij}/\varepsilon_i=d_{ji}/\varepsilon_j$. It follows
that $d_{ij}\varepsilon_j=d_{ji}\varepsilon_{i}$.
\end{proof}

By this lemma, we can get the valued quiver $(Q_{0},\mathcal
D,\Omega)$, which is called the {\em  induced valued quiver} from
$k(Q,\mathcal A)$, where the valuation $\mathcal{D}=\{(d_{ij},
d_{ji}): (i,j)\in Q_0\times Q_0\}$ and  there is just a unique
oriented edge from $i$ to $j$ when $_iM_j\not=0$.

By Definition \ref{def2.1} and Theorem \ref{prop2.3}, we have:

\begin{proposition} \label{proposition2.9}
For a generalized path algebra $k(Q,\mathcal A)$ over a field $k$
and $M$, $_iM_j$ defined as in (\ref{mm1}), (\ref{mm2}),
 we have

 (i)~
A $k$-pre-modulation $\mathcal{M}=(A_i,$$\;_iM_j)$ is obtained from
the induced valued quiver $( Q_{0},\mathcal D,\Omega)$ with
$d_{ij}=rank(_iM_j)_{A_j}$, $d_{ji}=rank_{A_i}(_iM_j)$  for the
valuation $\mathcal D=\{(d_{ij}, d_{ji}): (i,j)\in Q_0\times Q_0\}$;

(ii)~ Moreover, if $k$ is an algebraically closed field of
characteristic $0$ and $k(Q,\mathcal A)$ is semi-normal, then for
$_iM_j\neq 0$ (that is, there exists an arrow from $i$ to $j$), it
holds that $Hom(_{i}M_{j},A_{i})_{A_{i}}\cong
Hom(_{i}M_{j},A_{j})_{A_{j}}$ by Theorem \ref{prop2.3}, which means
that in this case, $\mathcal{M}=(A_i,$$\;_iM_j)$ is a regular
modulation.
\end{proposition}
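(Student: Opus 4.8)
The plan is to verify each clause of Definition \ref{def2.1} directly, leaning on Lemma \ref{le2.6} for part (i) and on Theorem \ref{prop2.3} (equivalently Lemma \ref{lem4.3}) for part (ii); no genuinely new computation is needed, since both assertions are consolidations of the earlier machinery. For (i), I would first record that by construction (\ref{mm1}) each ${}_iM_j=A_i\Omega(i,j)A_j$ is, by its very definition, the free $A_i$-$A_j$-bimodule on the basis $\Omega(i,j)$, so the freeness clause of Definition \ref{def2.1}(iii) is immediate; the numbers $d_{ij}=rank({}_iM_j)_{A_j}$ and $d_{ji}=rank_{A_i}({}_iM_j)$ then furnish the valuation $\mathcal D$ and satisfy exactly the rank conditions demanded of a $k$-pseudo-modulation, the algebras $A_i$ being finite-dimensional hence artinian. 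The one point beyond bookkeeping is upgrading the underlying \emph{pseudo}-valued graph to a genuine \emph{valued} graph, since Definition \ref{def2.1}(iii) requires a pre-modulation to live over a valued graph: this is precisely Lemma \ref{le2.6}, for on setting $\varepsilon_i=dim_kA_i$ one has $d_{ij}\varepsilon_j=d_{ji}\varepsilon_i$, so the positive integers $\varepsilon_i$ witness that $(Q_0,\mathcal D)$ is valued. With the orientation $\Omega$ placing a single arrow $i\to j$ whenever ${}_iM_j\neq 0$, the data $(A_i,{}_iM_j)$ then form a $k$-pre-modulation.

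For (ii), I would observe that freeness of all ${}_iM_j$ is already secured in (i), so by Definition \ref{def2.1}(v) it remains only to check that $\mathcal M$ is a generalized modulation, that is, that condition (\ref{keyequ}) holds for every pair $(i,j)$. When ${}_iM_j=0$ both Hom-bimodules vanish and the required isomorphism is trivial. When ${}_iM_j\neq 0$, semi-normality of $k(Q,\mathcal A)$ means that every $A_i$ is semisimple, so with $k$ algebraically closed of characteristic $0$ I apply Lemma \ref{lem4.3} with $A=A_i$, $B=A_j$, $M={}_iM_j$ to obtain $Hom_{A_i}({}_iM_j,A_i)\cong Hom_{A_j}({}_iM_j,A_j)$ as $A_j$-$A_i$-bimodules; this is exactly the content already packaged in Theorem \ref{prop2.3}, which asserts that such a pseudo-modulation with semisimple $A_i$ is a semi-normal generalized modulation. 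Combining this generalized-modulation condition with the freeness from (i) yields that $\mathcal M$ is a regular modulation.

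I do not expect a real obstacle, as the statement is a corollary of Lemma \ref{le2.6}, Theorem \ref{prop2.3} and Lemma \ref{lem4.3} together with Proposition \ref{eachother1}. The only places that demand care are keeping the pseudo-valued/valued distinction straight in (i) — it is Lemma \ref{le2.6}, not mere freeness, that justifies naming $\mathcal M$ a pre-modulation rather than only a pseudo-modulation — and, in (ii), disposing of the case ${}_iM_j=0$ separately before invoking Theorem \ref{prop2.3}/Lemma \ref{lem4.3}, which are stated for the nonzero bimodules arising over actual edges of the quiver.
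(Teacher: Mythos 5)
Your proposal is correct and follows essentially the same route as the paper, which derives part (i) from Definition \ref{def2.1} together with Lemma \ref{le2.6} (giving the induced valued quiver) and part (ii) from Theorem \ref{prop2.3}. Your added care about the pseudo-valued versus valued distinction and the trivial case ${}_iM_j=0$ only makes explicit what the paper leaves implicit.
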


By definition, such $k$-pre-modulation $\mathcal{M}=(A_i,$$\;_iM_j)$
built from the $\mathcal{A}$-path algebra $k(Q,\mathcal{A})$ is
unique, which is called the {\em corresponding $k$-pre-modulation
of} $k(Q,\mathcal{A})$, denoted as $\mathcal M_{k(Q,\mathcal{A})}$,
whose valued quiver is just the induced valued quiver from
$k(Q,\mathcal A)$.

Conversely, given a $k$-pre-modulation $\mathcal M=(A_i,$$_iM_j)$ of
a valued quiver $(\mathcal{G},\mathcal D,\Omega)$ with semisimple
algebras $A_i~ (i\in\mathcal G)$,
 we illustrate how to build its generalized path algebra.
 In fact, we only need to set up the quiver $Q$  for a generalzied path algebra.  Let the vertex set $Q_0=\mathcal G$.
 For any oriented pair
 $(i,j)\in\mathcal G\times\mathcal G$, let $t_{ij}$ be the number of generators in the $A_i$-$A_j$-basis of
 $_iM_j$ as free $A_i$-$A_j$-bimodule and set $t_{ij}$ arrows from $i$ to $j$. Then, the arrow set $Q_1$ is given when the oriented
 pair $(i,j)$ runs over the whole set $\mathcal G\times\mathcal G$. Thus, the quiver $Q$ is constructed and then the
 normal path algebra $k(Q, \mathcal A)=T(A_0,M)$ is obtained where $M=\oplus_{i,j}  A_{i}\Omega(i,j)A_j$ and  $A_0=\prod_{i\in Q_0}A_i$.

Since $_iM_j$ and $A_{i}\Omega(i,j)A_j$ have the same numbers of
generators in their bases as free $A_i$-$A_j$-bomodules, we get
$_iM_j\cong A_{i}\Omega(i,j)A_j$ for any $(i,j)\in\mathcal
G\times\mathcal G$ following the invariant basis property of all
$A_i$ as semi-simple algebras. Hence, the pre-modulation constructed
from $k(Q,\mathcal A)$ in the way of Proposition
\ref{proposition2.9} is just $\mathcal M=(A_i,$$_iM_j)$.

Thus, we have the following:
\begin{theorem}\label{cor2.10}
Pre-modulations and generalized path algebras can be constructed one
from another in the way described above.
 When the field $k$ is algebraically closed of characteristic $0$,
  (semi-)normal pre-modulations are (semi-)normal regular modulations.
\end{theorem}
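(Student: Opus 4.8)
The plan is to treat the two assertions separately. For the first, both constructions have already been spelled out in the paragraphs preceding the statement, so the proof amounts to checking that they are mutually inverse up to isomorphism. Starting from a generalized path algebra $k(Q,\mathcal{A})$ with semisimple vertex algebras, Proposition \ref{proposition2.9}(i) produces the corresponding pre-modulation $\mathcal{M}_{k(Q,\mathcal{A})}=(A_i,\,_iM_j)$ on the induced valued quiver, where $_iM_j=A_i\Omega(i,j)A_j$ is free with basis $\Omega(i,j)$. Conversely, from a pre-modulation $\mathcal{M}=(A_i,\,_iM_j)$ with semisimple $A_i$, I set $Q_0=\mathcal{G}$ and place $t_{ij}$ arrows from $i$ to $j$, where $t_{ij}$ is the $A_i$-$A_j$-bimodule rank of $_iM_j$, obtaining a quiver $Q$ and the normal generalized path algebra $k(Q,\mathcal{A})=T(A_0,M)$ with $M=\oplus_{i,j}A_i\Omega(i,j)A_j$.

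To see that these are inverse, I would run the composite in each order. Starting from $k(Q,\mathcal{A})$, passing to $\mathcal{M}_{k(Q,\mathcal{A})}$ and back, the reconstructed quiver has exactly the bimodule rank of $_iM_j=A_i\Omega(i,j)A_j$ arrows from $i$ to $j$, which equals $|\Omega(i,j)|$; hence $Q$ is recovered and $\mathcal{A}$ is unchanged. Starting instead from a pre-modulation $\mathcal{M}$, the generalized path algebra I build has bimodules $A_i\Omega(i,j)A_j$ that are free of the same rank as $_iM_j$, so the crucial step is the $A_i$-$A_j$-bimodule isomorphism $_iM_j\cong A_i\Omega(i,j)A_j$. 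This is where semisimplicity of the $A_i$ enters: viewing both as right $A_j\otimes_k A_i^{op}$-modules (as in \textbf{2.5}), two free modules of equal finite rank are isomorphic because $A_j\otimes_k A_i^{op}$ is a finite-dimensional $k$-algebra and therefore has the invariant basis property, so rank is detected by $k$-dimension. Thus both composites are the identity up to isomorphism, which proves the first assertion.

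For the second assertion, suppose $k$ is algebraically closed of characteristic $0$ and let $\mathcal{M}=(A_i,\,_iM_j)$ be a (semi-)normal pre-modulation. By the definition of pre-modulation the bimodules $_iM_j$ are already free, so by Definition \ref{def2.1}(v) it suffices to verify that $\mathcal{M}$ is a generalized modulation, that is, that the condition (\ref{keyequ}) holds for every $(i,j)\in\mathcal{G}\times\mathcal{G}$. Since the $A_i$ are (semi-)simple, Theorem \ref{prop2.3} applies directly and yields that $\mathcal{M}$ is a (semi-)normal generalized modulation, so (\ref{keyequ}) holds. Combining freeness of the $_iM_j$ with (\ref{keyequ}) gives exactly the definition of a (semi-)normal regular modulation, completing the proof.

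The only genuine obstacle is the bimodule isomorphism $_iM_j\cong A_i\Omega(i,j)A_j$ in the first part; everything else is either bookkeeping or a direct appeal to Theorem \ref{prop2.3}. That isomorphism reduces to the invariant basis property of the finite-dimensional algebras $A_j\otimes_k A_i^{op}$, which holds by a dimension count, so the difficulty lies only in identifying the correct ring over which to compare ranks and in confirming that the semisimplicity hypothesis guarantees it.
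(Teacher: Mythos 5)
Your proposal is correct and follows essentially the same route as the paper: the first assertion is proved by composing the two constructions and reducing everything to the bimodule isomorphism $_iM_j\cong A_i\Omega(i,j)A_j$, which the paper likewise justifies by freeness, equality of ranks, and the invariant basis property of the semisimple vertex algebras; the second assertion is, as in the paper, an immediate combination of Theorem \ref{prop2.3} with the freeness built into the definition of a pre-modulation. The only cosmetic difference is that you make the ``mutually inverse up to isomorphism'' check explicit in both directions, whereas the paper only records the nontrivial direction.
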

By this and  Theorem \ref{th2.4}, we have:
\begin{corollary}\label{prop2.8}
 For a generalized path algebra $k(Q,\mathcal{A})$ and
 the corresponding $k$-pre-modulation $\mathcal{M}=(A_i,$$\;_iM_j)$,  the category
$Rep(\mathcal{M})$ (resp. $rep(\mathcal{M})$) is
 equivalent to the category $Mod_{k(Q,\mathcal{A})}$ (resp. $mod_{k(Q,\mathcal{A})}$).
\end{corollary}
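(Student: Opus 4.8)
The plan is to reduce the statement to a direct combination of Theorem~\ref{th2.4} with the explicit identification of the tensor algebra of the corresponding pre-modulation. The essential observation is that, for the corresponding $k$-pre-modulation $\mathcal{M}=\mathcal{M}_{k(Q,\mathcal{A})}$, the tensor algebra $T(\mathcal{M})$ is literally the generalized path algebra $k(Q,\mathcal{A})$, not merely isomorphic to it in some indirect way. Once this identification is secured, the asserted equivalences of categories are immediate.

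First I would recall the two constructions side by side. By definition (see~2.6), $k(Q,\mathcal{A})=T(A_0,M)$ where $A_0=\prod_{i\in Q_0}A_i$ and $M=\oplus_{(i,j)}A_i\Omega(i,j)A_j$, with ${}_iM_j=A_i\Omega(i,j)A_j$ the free $A_i$-$A_j$-bimodule on $\Omega(i,j)$. On the other hand, the corresponding pre-modulation $\mathcal{M}=(A_i,{}_iM_j)$ produced by Proposition~\ref{proposition2.9} takes exactly these same algebras $A_i$ and bimodules ${}_iM_j=A_i\Omega(i,j)A_j$, and its associated tensor algebra is $T(\mathcal{M})=T(A,M')$ with $A=\oplus_{i\in\mathcal{G}}A_i$ and $M'=\oplus_{(i,j)}{}_iM_j$.

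The step to check is that these two tensor algebras coincide. Since $\mathcal{G}=Q_0$ is finite, $A=\oplus_i A_i=\prod_i A_i=A_0$ as $k$-algebras, and $M'=\oplus_{(i,j)}A_i\Omega(i,j)A_j=M$ as $A_0$-$A_0$-bimodules; the two $A$-bilinear multiplications on tensor powers agree because both are induced by the natural maps $M^{\otimes_{A_0}i}\times M^{\otimes_{A_0}j}\to M^{\otimes_{A_0}(i+j)}$. Hence $T(\mathcal{M})=k(Q,\mathcal{A})$ as $k$-algebras; this identification is already recorded in the construction preceding Theorem~\ref{cor2.10}.

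With this in hand the conclusion follows at once: applying Theorem~\ref{th2.4} to the pseudo-modulation $\mathcal{M}$ gives an equivalence $Rep(\mathcal{M})\simeq Mod_{T(\mathcal{M})}$, and likewise $rep(\mathcal{M})\simeq mod_{T(\mathcal{M})}$ on the finitely generated subcategories; substituting $T(\mathcal{M})=k(Q,\mathcal{A})$ yields the stated equivalences. There is no genuine obstacle here—the only point requiring care is the bookkeeping identification $T(\mathcal{M})=k(Q,\mathcal{A})$, and in particular the fact that the finite direct sum $\oplus_i A_i$ and the finite direct product $\prod_i A_i$ give the same ring $A_0$, so that no discrepancy arises between the two tensor-algebra constructions.
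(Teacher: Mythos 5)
Your proposal is correct and follows essentially the same route as the paper, which derives the corollary directly from Theorem~\ref{th2.4} together with the identification (recorded in the discussion around Proposition~\ref{proposition2.9} and Theorem~\ref{cor2.10}) of $T(\mathcal{M})$ with $k(Q,\mathcal{A})$. Your explicit check that $\oplus_i A_i=\prod_i A_i$ for finite $Q_0$ and that the two bimodules and multiplications coincide is exactly the bookkeeping the paper leaves implicit.
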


Concretely, using the functors in the proof of Theorem \ref{th2.4},
we can give the mutual constructions between representations of a
generalized path algebra $k(Q,\mathcal A)$ and that of its
corresponding $k$-pre-modulation $\mathcal M=(A_i,$$_iM_j)$.

 Two $k$-pseudo-modulations $\mathcal{M}=(A_i,$$\;_iM_j)$ of the pseudo-valued quiver
 $(Q_{0},\mathcal D,\Omega)$
 and $\mathcal{N}=(B_i,$$\;_iN_j)$ of $(P_{0},\mathcal C,\Psi)$
are said to be {\em isomorphic} if there exists a permutation
$\theta$ such that $(Q_{0},\mathcal
D,\Omega)\stackrel{\theta}\cong(P_{0},\mathcal C,\Psi)$ as
pseudo-valued quivers  and $A_i\cong B_{\theta(i) }$ as
$k$-algebras, $_iM_j\cong$ $ _{\theta(i)}N_{\theta(j)}$ as bimodules
for any $(i,j)\in Q_0\times Q_0$. Here, $(Q_{0},\mathcal
D,\Omega)\stackrel{\theta}\cong(P_{0},\mathcal C,\Psi)$ as
pseudo-valued quivers means that they are isomorphic via a
permutation $\theta$
  as directed graphs and $d_{ij}=c_{\theta(i)\theta(j)}$, $d_{ji}=c_{\theta(j)\theta(i)}$ for any $(i,j)\in Q_0\times Q_0$.

Although pseudo-modulation and  tensor algebra of $\mathcal A$-path
type can be constructed one after another as stated in Proposition
\ref{eachother1}, isomorphism condition can not be shifted between
them. In fact, if two pseudo-modulations are isomorphic, then their
related tensor algebras are isomorphic, too. But, the converse is
not true.

For example, let $\Delta$ be a quiver consisting of a unique vertex
without loops and $\Delta'$ be a quiver consisting of two vertices
without loops and arrows. Then, clearly $\Delta\not\cong\Delta'$.
For any two artinian algebras $S_1$ and $S_2$, we have
$k(\Delta,\{S_1\oplus S_2\})\cong S_1\oplus S_2\cong
k(\Delta',\{S_1, S_2\})$. However, trivially, their related
pre-modulations $\mathcal M_{k(\Delta,\{S_1\oplus S_2\})}\not\cong
\mathcal M_{k(\Delta',\{S_1, S_2\})}$.

This example means that  the isomorphism theorem does not hold for
generalized path algebras, in general. Now, we give some cases of
generalized path algebras in which the isomorphism theorem holds.

(i)$^{\cite{LXL}}$~ The path algebras $kQ\cong kP$ if and only if
$Q\cong P$ as quivers.

(ii)$^{\cite{C}}$~ If finite quivers $\Delta$ and $\Delta'$ are
acyclic, then normal generalized path algebras $k(\Delta,\mathcal
A)\cong k(\Delta',\mathcal A')$ if and only if there is $
\Delta\stackrel{\theta}{\cong}\Delta'$ as quivers such that
$A_i\cong A'_{\theta(i)}$ as algebras for  $i\in Q_0$.

(iii)~ When $\Delta$ and $\Delta'$ have oriented cycles,
 the isomorphism theorem for $k(\Delta,\mathcal A)$ and
$k(\Delta',\mathcal A')$ as in (ii) can also be proved in the
similar method of (i)  given in \cite{LXL} or the dual method for
generalized path coalgebras given in \cite{LLiu}.

As a summary, we have:
\begin{theorem}\label{th2.7} {\em (Isomorphism Theorem)}~ Two normal generalized path algebras with finite
(acyclic) quivers are isomorphic if and only if their corresponding
normal $k$-pre-modulations are isomorphic.
\end{theorem}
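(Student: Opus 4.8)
The plan is to reduce the Isomorphism Theorem to the three cases (i), (ii), (iii) already collected in the excerpt, using the bijective correspondence between normal generalized path algebras and normal pre-modulations established in Theorem \ref{cor2.10}. The key observation is that the correspondence $k(Q,\mathcal A)\leftrightarrow\mathcal M_{k(Q,\mathcal A)}$ is constructed so that the quiver $Q$ of the generalized path algebra and the valued quiver $(\mathcal G,\mathcal D,\Omega)$ of the pre-modulation carry the same combinatorial data: $Q_0=\mathcal G$, and the number $t_{ij}$ of arrows from $i$ to $j$ equals the size of the $A_i$-$A_j$-basis of $_iM_j$. Thus an isomorphism of the underlying (valued) quivers on one side transports verbatim to the other.

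First I would prove the easy direction. Suppose the two normal pre-modulations $\mathcal M=(A_i,{}_iM_j)$ and $\mathcal N=(B_i,{}_iN_j)$ are isomorphic via a permutation $\theta$, so that $A_i\cong B_{\theta(i)}$ and $_iM_j\cong{}_{\theta(i)}N_{\theta(j)}$ as bimodules. By Theorem \ref{cor2.10} their associated generalized path algebras are $T(A_0,M)$ and $T(B_0,N)$ with $M=\oplus_{i,j}{}_iM_j$ and $N=\oplus_{i,j}{}_iN_j$. The permutation $\theta$ induces a $k$-algebra isomorphism $A_0\to B_0$ and an $A_0$-$A_0$-bimodule isomorphism $M\to N$ compatible with it, and these extend to an isomorphism of the tensor algebras $T(A_0,M)\cong T(B_0,N)$ grade by grade, since each graded piece $M^{\otimes_{A_0}n}$ maps isomorphically to $N^{\otimes_{B_0}n}$. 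Hence $k(Q,\mathcal A)\cong k(Q',\mathcal A')$.

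For the converse, I would assume $k(Q,\mathcal A)\cong k(Q',\mathcal A')$ as $k$-algebras and invoke whichever of the recorded results applies: (ii) in the acyclic case and (iii) (via the methods of \cite{LXL} or \cite{LLiu}) when oriented cycles are present, as indicated in the hypothesis ``finite (acyclic) quivers''. These results yield a quiver isomorphism $Q\stackrel{\theta}{\cong}Q'$ together with $A_i\cong A'_{\theta(i)}$ as $k$-algebras. It then remains to upgrade this quiver isomorphism to an isomorphism of valued quivers and of the bimodules $_iM_j$. Since $\theta$ matches the arrow-count $t_{ij}$ with $t'_{\theta(i)\theta(j)}$ and $A_i\cong A'_{\theta(i)}$, the valuations agree, $d_{ij}=c_{\theta(i)\theta(j)}$ and $d_{ji}=c_{\theta(j)\theta(i)}$, so $(\mathcal G,\mathcal D,\Omega)\stackrel{\theta}{\cong}(\mathcal G',\mathcal C',\Omega')$; and because the free bimodules $_iM_j\cong A_i\Omega(i,j)A_j$ are determined up to isomorphism by the pair $(A_i,A_j)$ together with the basis cardinality $t_{ij}$ (invariant basis property for the semisimple $A_i$, as used in Theorem \ref{cor2.10}), we obtain $_iM_j\cong{}_{\theta(i)}N_{\theta(j)}$. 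This gives the required isomorphism of pre-modulations.

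The main obstacle is the converse direction, and specifically the step of extracting a quiver isomorphism $\theta$ from a bare algebra isomorphism $k(Q,\mathcal A)\cong k(Q',\mathcal A')$; this is exactly the content of the isomorphism theorems (i)--(iii) for generalized path algebras, which must be cited rather than reproved. Once $\theta$ and the algebra isomorphisms $A_i\cong A'_{\theta(i)}$ are in hand, promoting them to bimodule isomorphisms is routine, resting only on the invariant basis property of semisimple algebras and the explicit free description $_iM_j\cong A_i\Omega(i,j)A_j$. I expect the delicacy to lie entirely in ensuring the cited isomorphism theorems cover the stated generality (finite quivers, possibly with cycles), which is why the statement hedges with ``(acyclic)''.
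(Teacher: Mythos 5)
Your proposal is correct and follows essentially the same route as the paper: the paper presents Theorem \ref{th2.7} simply ``as a summary'' of the mutual construction in Theorem \ref{cor2.10} together with the cited isomorphism theorems (i)--(iii), exactly the reduction you carry out. Your write-up merely makes explicit the transport of the quiver isomorphism to the bimodules via the invariant basis property, which the paper leaves implicit in its discussion preceding Theorem \ref{cor2.10}.
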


Another example of $k$-pre-modulation for which isomorphism theorem
holds is the classical $k$-modulation (see Example
\ref{exam2.1}(i)). For $k$-modulations $\mathcal{M}=(F_i,$$\;_iM_j)$
of a valued quiver $(\mathcal{G},\mathcal D, \Omega)$ and
$\mathcal{M'}=(F'_s,$$\;_sM'_t)$ of $(\mathcal{G}',\mathcal D',
\Omega')$ with division $k$-algebras $F_i, F'_s$, denote by
$T(\mathcal M)$ and $T(\mathcal M')$ the corresponding tensor
algebras as given in the proof of
  Proposition \ref{eachother1}. Then as it was shown in \cite{Liu2}, $\mathcal M\cong \mathcal M'$
  if and only if $\mathcal T(M)\cong \mathcal T(M')$.

\section{Some examples from related topics }

{\bf (1) ~Group species}

 A {\bf group species}$^{\cite{Dem}}$ is a triple
$G=(I,(\Gamma_i)_{i\in I}, (M_{ij})_{(i,j)\in I^2})$ where $I$ is a
finite set and for each $i\in I$, $\Gamma_i$ is a finite group and
for each $(i,j)\in I^2$, $M_{ij}$ is a finite dimensional
$(k\Gamma_i,k\Gamma_j)$-bimodule.

A group species can be seen as a $k$-pseudo-modulation of a
pseudo-valued quiver as follows. Consider $Q_0=I$ as the vertex set.
For an ordered pair $(i,j)\in I\times I$, if $M_{ij}\not=0$, set an
arrow $\rho_{ij}$ from $i$ to $j$ with valuation $(d_{ij},d_{ji})$
for $d_{ij}=rank(_{k\Gamma_i}M_{ij})$ and
$d_{ji}=rank({M_{ij}}_{k\Gamma_j})$. Let the arrow set $Q_1$ consist
of all such arrows. Let $\mathcal D=\{(d_{ij},d_{ji}):
\forall\rho_{ij}\in Q_1\}$. Thus, $(\mathcal Q,\mathcal D)$ is a
pseudo-valued quiver with $\mathcal Q=(Q_0,Q_1)$ and  the group
species $G$ can be thought as $((k\Gamma_i)_{i\in Q_0},
(M_{ij})_{(i,j)\in
 Q_1})$ a pseudo-modulation of $(\mathcal Q,\mathcal
 D)$.

In \cite{Dem},  a group species $G=(I,(\Gamma_i)_{i\in I},
(M_{ij})_{(i,j)\in I^2})$ is assumed to be over a field $k$ with
char$k\not|\mid\Gamma_i\mid$ for $i\in I$. In this case, all
$k\Gamma_i$ are semisimple algebras.
   By Theorem \ref{prop2.3}, we have
 \begin{proposition}\label{prop5.1}
  Suppose $k$ is an algebraically closed field of characteristic
 $0$.
Then the pseudo-modulation $((k\Gamma_i)_{i\in Q_0},
(M_{ij})_{(i,j)\in
 Q_1})$ from a group species $G=(I,(\Gamma_i)_{i\in I}, (M_{ij})_{(i,j)\in I^2})$ is a semi-normal
 generalized modulation, where $Q_0=I$, $Q_1=\{(i,j)\in I^2:~ M_{ij}\not=0\}$.
\end{proposition}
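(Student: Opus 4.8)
The plan is to read the statement as a direct instance of Theorem \ref{prop2.3}, reducing the work to checking that theorem's two hypotheses for the pseudo-modulation attached to $G$. The identification of the group species with the $k$-pseudo-modulation $\mathcal{M}=((k\Gamma_i)_{i\in Q_0},(M_{ij})_{(i,j)\in Q_1})$ of the pseudo-valued quiver $(\mathcal{Q},\mathcal D)$, with $A_i=k\Gamma_i$ and ${}_iM_j=M_{ij}$, has already been carried out in the discussion preceding the statement; since each $M_{ij}$ is finite-dimensional by the definition of a group species, it is in particular finitely generated as a $k\Gamma_i$-$k\Gamma_j$-bimodule, so $\mathcal{M}$ genuinely is a $k$-pseudo-modulation in the sense of Definition \ref{def2.1}(i). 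It then remains only to verify semi-normality and the defining isomorphism (\ref{keyequ}) of a generalized modulation.

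First I would settle semi-normality. As $\mathrm{char}\,k=0$ we have $\mathrm{char}\,k\nmid|\Gamma_i|$ for every $i\in I$, so Maschke's theorem shows that each group algebra $A_i=k\Gamma_i$ is a finite-dimensional semisimple $k$-algebra; by Definition \ref{def2.1}(ii) this is precisely the assertion that $\mathcal{M}$ is semi-normal. Next I would obtain the generalized-modulation condition by applying Theorem \ref{prop2.3}: its hypotheses---that $k$ be algebraically closed of characteristic $0$ and that all $A_i$ be (semi-)simple---are now in hand, so $\mathcal{M}$ is a semi-normal generalized modulation, and in particular $Hom_{A_i}({}_iM_j,A_i)\cong Hom_{A_j}({}_iM_j,A_j)$ as $A_j$-$A_i$-bimodules for every pair $(i,j)$. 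Concretely, this last isomorphism is Lemma \ref{lem4.3} applied to the finite-dimensional semisimple algebras $k\Gamma_i,k\Gamma_j$ and the bimodule $M_{ij}$.

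I expect no essential obstacle: the entire analytic content---the Hom-duality isomorphism (\ref{keyequ}), which fails for general pseudo-modulations---was already proved in Lemma \ref{lem4.3} and repackaged in Theorem \ref{prop2.3}. The only points needing care are confirming that the characteristic assumption of \cite{Dem}, namely $\mathrm{char}\,k\nmid|\Gamma_i|$, is subsumed by our stronger hypothesis $\mathrm{char}\,k=0$ so that Maschke indeed applies, and noting that the $M_{ij}$ satisfy the finiteness requirement built into a pseudo-modulation---both of which are immediate.
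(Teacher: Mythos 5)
Your proposal matches the paper's own (very short) argument: the paper likewise observes that $\mathrm{char}\,k=0$ forces each $k\Gamma_i$ to be semisimple by Maschke, and then cites Theorem \ref{prop2.3} (whose content is Lemma \ref{lem4.3}) to conclude. Your additional checks --- that the $M_{ij}$ are finitely generated and that the identification of the group species with a pseudo-modulation has already been set up --- are correct and only make explicit what the paper leaves implicit.
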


 As
mentioned in \cite{Dem}, the category of representations of a group
species is equivalent to the category of finite generated
representations over its ``path algebra" (i.e. its tensor algebra).
According to Proposition \ref{prop5.1}, this statement is a special
case of Theorem \ref{th2.4}.

 The notion of group species is introduced in \cite{Dem} with
potentials and dcorated representations. In some {\em good} cases,
said to be {\em non-degenerate}, their  mutations are defined in
such a way that these mutations mimic the mutations of seeds defined
by Fomin and Zelevinsky \cite{FZ} for a skew-symmetrizable exchange
matrix defined from group species. When an exchange matrix can be
associated to a non-degenerate group species with potential, an
interpretation of the $F$-polynomials and the $g$-vectors in
\cite{FZ2} is given in the term of the mutation of group species
with potentials and their decorated representations.

Due to Proposition \ref{prop5.1}, we will be motivated to plan to
generalize the conclusions in \cite{Dem} as said above to
semi-normal generalized modulation. In the theory of mutations,  for
a finite dimensional basic hereditary algebra $A=k\Gamma$, under the
condition the mutation can be defined, the mutation of $A$ is just
isomorphic to the path algebra of the quiver which is the mutation
of $\Gamma$. However, since mutations are perverse equivalent but
not Morita equivalent (see \cite{La}), it is interesting to
constitute the mutation theory of finite dimensional (possibly,
non-basic) algebras via semi-normal generalized modulations.
\\

{\bf (2) ~Path algebras with loops}

As well-known, many subjects will be difficult if the underlying
quiver has loops. For examples, Kac conjectures were discussed for
quivers without loops, see \cite{K}\cite{K1}\cite{SV}; the mutation
theory of basic algebras was given in the case for quivers without
loops, see \cite{La}.

We hope to consider  quivers with loops under the viewpoint of
pseudo-modulations so as to give a possible approach to study such
quivers for some related theories.

For a quiver $\Gamma=(\Gamma_0,\Gamma_1)$, divide the vertex set
$\Gamma_0$ into two parts: $\Gamma_0=\Gamma_0^{0}\cup\Gamma_0^{1}$
where $\Gamma_0^{0}$ consists of all vertices without loops,
$\Gamma_0^{1}$ consists of all vertices with loops. For a vertex
$i\in \Gamma_0^{1}$, let $\Phi_i$ be the subquiver consisting of all
loops at $i$. Then the whole set of loops in $\Gamma$ is just
$\Phi=\bigcup_{i\in\Gamma_0^{1}}(\Phi_i)_1$.  Define a new quiver
$\breve\Gamma$ related to $\Gamma$ with the vertex set
$\breve\Gamma_0=\Gamma_0$ and the arrow set
$\breve\Gamma_1=\Gamma_1\backslash\Phi$. Clearly, this quiver
$\breve\Gamma$ is one without loops.

The important fact is that $k\Gamma$ can be considered as a
$k$-pre-modulation over the quiver $\breve\Gamma$ without loops.

In fact, let a collection of $ k$-algebras be $ \mathcal{A}=\{ A_i
\;|\; i \in \Gamma_0=\breve{\Gamma}_0\}$ with $A_i=k$ for
$i\in\Gamma^0_0$ and $A_i=k\Phi_i$ for $i\in\Gamma^1_0$; let
$\breve{\Omega}(i,j)=\{a\in\breve\Gamma_1: t(a)=j, h(a)=i\}$. Then,
for any $i,j\in\breve\Gamma_0, i\not=j$,
$_iM_j\stackrel{def}=A_i\breve \Omega(i,j)A_j$
 is the free
$A_i$-$A_j$-bimodule with basis $\breve\Omega(i,j)$; for any
$i\in\breve\Gamma_0$, $_iM_i\stackrel{def}=0$ and
$\breve\Omega(i,i)=\emptyset$.
 Thus, the path algebra $k\Gamma$ is just the generalized path algebra $ k(\breve\Gamma, \mathcal{A})$ over the quiver
 $\breve\Gamma$ without loops.

By Proposition \ref{proposition2.9}, $k\Gamma= k(\breve\Gamma,
\mathcal{A})$ is considered as the pre-modulation
$\mathcal{M}=(A_i,$$\;_iM_j)$ over the valued quiver
$(\Gamma_0,\mathcal D,\Omega)$ for the valuation $\mathcal
D=\{(d_{ij},d_{ji}): (i,j)\in \breve\Gamma_0\times\breve\Gamma_0\}$
with $d_{ij}=|\breve \Omega(i,j)||(\Phi_i)_1|$, $d_{ji}=|\breve
\Omega(i,j)||(\Phi_j)_1|$ and the orientation $\Omega$ is given from
 $j$ to $i$ for any $i\not=j$ if $|\breve
\Omega(i,j)|\not=0$.
 Note that the valued quiver
$(\Gamma_0,\mathcal D,\Omega)$ has not loops.

This discussion means one can transfer the study on path algebras
with loops into that on generalized path algebras and
pre-modulations of valued quivers without loops. This viewpoint
gives us a new approach to those subjects whose underlying quiver
has loops.
\\

{\bf (3) ~Differential tensor algebras}

In \cite{BSZ}, the theory of differential tensor algebras is
introduced as a natural generalization of the theory of algebras and
their module categories, which is a useful tool in establishing some
deep results in the representation theory of algebras. It has some
common features with the original theory in terms of differential
graded categories as well as with formulation given in terms of
bocses.

A tensor algebra $T=T(A,M)$ is  graded standardly by $T_l=M^{\otimes
l}$ for all $l\geq 0$ with $T_0=A$.

For a graded $k$-algebra $T$, a linear transformation $\delta$ on
$T$ is said to be a {\em differential } if it satisfies
$\delta([T]_i)\subseteq [T]_{i+1}$ for all $i$ and the Leibniz rule
$\delta(ab)=\delta(a)b+(-1)^{deg(a)}a\delta(b)$ for all homogeneous
elements $a,b\in T$.

A {\em differential tensor algebra} or {\em
ditalgebra}$^{\cite{BSZ}}$ $\mathcal A$ is by definition a pair
$\mathcal A=(T,\delta)$ where $T$ is a tensor algebra and $\delta$
is a differential on $T$ satisfying $\delta^2=0$.

Now we define differential pseudo-modulation and give its relation
with ditalgebra.
\begin{definition}\label{def6.1} (1)~
Given a $k$-pseudo-modulation $\mathcal{M}=(A_i,$$\;_iM_j)_{i\in
\mathcal{G}}$  of a pseudo-valued quiver $(\mathcal{G},\mathcal D)$
and its related tensor algebra of $\mathcal A$-path type $T(A,M)$ as
in Proposition \ref{eachother1}, we say that $\delta$ is a {\em
differential} on $\mathcal M$ if $\delta: T(A,M)\rightarrow T(A,M)$
is a linear transformation such that

(i)~$\delta(A_i)\subseteq$$_iM_i$;

 (ii)~$\delta(_iM_{i_1}\otimes_{A_{i_1}}\cdots\otimes_{A_{i_{s-1}}}$$
_{i_{s-1}}M_j)\subseteq\sum_{l\in\mathcal
G}$$_iM_l\otimes_{A_l}$$_lM_{i_1}\otimes_{A_{i_1}}\cdots\otimes_{A_{i_{s-1}}}$$_{i_{s-1}}M_j+\\
+\sum_{l\in\mathcal
G}$$_iM_{i_1}\otimes_{A_{i_1}}$$_{i_1}M_l\otimes_{A_l}$$_lM_{i_2}\otimes_{A_{i_2}}\cdots
\otimes_{A_{i_{s-1}}}$$_{i_{s-1}}M_j+\cdots+\sum_{l\in\mathcal
G}$$_iM_{i_1}\otimes_{A_{i_1}}\cdots\otimes_{A_{i_{s-1}}}$$_{i_{s-1}}M_l\otimes_{A_l}$$_lM_j$\\
and the Leibniz rule $\delta(ab)=\delta(a)b+(-1)^{deg(a)}a\delta(b)$
 for all
$a\in$$_iM_{i_1}\otimes\cdots\otimes
\;_{i_{s-1}}M_j,\;b\in$$_uM_{u_1}\otimes\cdots\otimes
\;_{u_{t-1}}M_v$.

(2)~A {\em differential pseudo-modulation} $\mathcal M$ is by
definition a pair $(\mathcal M, \delta)$ with a differential
$\delta$ on $\mathcal M$ satisfying $\delta^2=0$.
\end{definition}

It is easy to check that the Leibniz rule is satisfied by all
homogeneous elements about the standard grading of $T(A,M)$.
Therefore, by Proposition \ref{eachother1}, the related tensor
algebra $T(A,M)$ of a
 differential pseudo-modulation $\mathcal M$ is a differential
 tensor algebra with
differential $\delta$.
\\

{\bf (4) ~Differential graded category}

 A category $\mathcal T$ is called a {\em graded
 category (GC)}$^{\cite{Ke}\cite{R}}$ if for any objects $a,b$ in $\mathcal T$, the set $Hom_{\mathcal T}(a,b)$ of morphisms is a
 set-theoretical union of the sets $ T_i(a,b), \; 0\leq i<+\infty$,
 and for any $\alpha\in T_i(a,b), \beta\in T_j(b,c)$, then $\beta\alpha\in T_{i+j}(a,c)$, where
  $\alpha$ is said to be of {\em degree $i$}. If each set $T_i(a,b)$ is a vector space over $k$ and the multiplication by a fixed morphism is a
 homomorphism of these spaces, then $\mathcal T$ is said to be  a {\em GC over the field
 $k$}.

For a positive integer $n$, a graded category $\mathcal T$ over a
field $k$ is said to be a {\em
 differential $n$-graded category} (briefly, {\em $n$-DGC}) if
 there is a $k$-linear map $D: T\rightarrow T$ for
 $T=\oplus_{a,b\in \mathcal T}Hom_{\mathcal T}(a,b)$
  such that  $D^2=0$ and  $D(T_i(a,b))\subseteq T_{i+n}(a,b)$  for each $a,b\in \mathcal T,~i\geq
  0$,
  and the Leibnitz formula holds:
$$ D(\beta\alpha)=D(\beta)\alpha+(-1)^{ndeg\beta}\beta D(\alpha)$$
for all homogeneous elements $\alpha,\beta\in T$. This $D$ is called
an {\em $n$-differential} of $\mathcal T$.

From \cite{M0}, we know that for any bimodule $\mathcal M$ over a
category $\mathcal K$, one can construct a tensor category
$T(\mathcal M)$ of $\mathcal M$, i.e. a graded category $T(\mathcal
M)$ such that  $T_0=\mathcal M$, $T_1=\mathcal M$ and for $n>1$,
$T_n=\mathcal M\otimes_{\mathcal K}\mathcal M\otimes_{\mathcal
K}\cdots\otimes_{\mathcal K}\mathcal M$ with $n$ factors. A graded
category which is a tensor algebra of  a bimodule is called a {\em
semifree GC} in \cite{R}\cite{RK}\cite{KR}.

For an $k$-pseudo-modulation $\mathcal{M}=(A_a,$$\;_aM_b)$ of a
pseudo-valued quiver $(\mathcal{G},\mathcal D,\Omega)$ and its
related  tensor algebra of $\mathcal A$-path type
$T(\mathcal{M})\stackrel{def}{=}T(A,M)$
 for $A=\oplus_{a\in \mathcal{G}}A_a$ and $M=\oplus_{(a,b)\in\mathcal G\times\mathcal G}$$_aM_b$,
 we can define the GC $\mathcal T$ whose objects are the vertices
 in $\mathcal G$ and for $a,b\in \mathcal G$ whose morphism set $Hom_{\mathcal T}(a,b)=\bigcup_{i\geq 0}T_i(a,b)$
 with

 $T_i(a,b)=\sum_{(a\alpha_1a_1\alpha_2a_2\cdots a_{i-1}\alpha_ib)}$
 $_aM_{a_1}\otimes_{A_{a_1}}$$_{a_1}M_{a_2}\otimes_{A_{a_2}}\cdots\otimes_{A_{a_{i-1}}}$$_{a_{i-1}}M_b$\\
where the sum runs over all paths $(a\alpha_1a_1\alpha_2a_2\cdots
a_{i-1}\alpha_ib)$ from $a$ to $b$ in the pseudo-valued quiver
$(\mathcal{G},\mathcal D,\Omega)$. Trivially, $
T_i(a,b)T_j(b,c)\subseteq T_{i+j}(a,c)$.

In this case, we call it a {\em free} graded category generated by
the pseudo-valued quiver $(\mathcal{G},\mathcal D,\Omega)$ due to
\cite{R}.

Hence, a $k$-pseudo-modulation $\mathcal M$ and also the related
tensor algebra of $\mathcal A$-path-type  $T(\mathcal M)$ can
equivalently be considered as this free graded category $\mathcal
T$.

However, a differential of degree $n$ on $T(\mathcal M)$ does not
need to be a differential of some degree on its graded category
$\mathcal T$. For example, particularly, in \cite{LT}, for a path
algebra $k\Gamma$, we give the method to construct all differentials
$D$ on $k\Gamma$, not on its related graded category $\mathcal
T_{k\Gamma}$ in general. It needs to find out such differential of
degree $n$ on $k\Gamma$ that with this $D$, the graded category
$\mathcal T_{k\Gamma}$ of $k\Gamma$ becomes to a DGC.

 The
motivated
 question is how to construct differentials on a
$k$-pseudo-modulation $\mathcal M$ and moreover to choose such ones
of them that its corresponding graded category becomes to a DGC. In
general, it is interesting to characterize differentials of some
degrees on an arbitrary graded category and discuss the Lie algebra
composed by all such differentials.

\section{Natural valued quiver
and valued Ext-quiver of an algebra}

The natural quiver $\Delta_A$ associated to an artinian algebra $A$
 is important for some researches in \cite{Li}\cite{LC}\cite{LL}, etc.

 Denote by $r$ the radical of $A$.
 Write $A/r=\bigoplus_{i=1}^{s}A_{i}$ where
$A_{i}$ are two-sided simple ideals of $A/r$ for all $i$.
 Then, $r/r^2$ is
an $A/r$-bimodule by $\bar{a}\cdot(x+r^2)\cdot \bar{b}=axb+r^2$ for
any $\bar a=a+r,\bar b=b+r\in A/r$ and $ x\in r$. Thus,
$\;_{i}M_{j}=A_{i}\cdot r/r^{2}\cdot A_{j}$ is a finitely generated
 $A_{i}$-$A_{j}$-bimodule for each pair $(i,j)$.

 Let the vertex set $\Delta_{0}=\{1,\cdots,s\}$. For $i,j\in \Delta_0$, set the number $t_{ij}$ of arrows
from $i$ to $j$ in $\Delta$ to be $rank(_{A_{i}}(_iM_j)_{A_{j}})$.
 Then, $\Delta_A=(\Delta_{0},\Delta_{1})$ is
called the {\em natural quiver}$^{\cite{LC}}$ of $A$.
 Moreover, one can construct the normal generalized path algebra $k(\Delta_A,\mathcal A)$
  with $\mathcal A=\{A_1,\cdots,A_s\}$, which is defined as the {\em  associated normal generalized
path algebra}  of  $A$. By Proposition \ref{proposition2.9}, from
$k(\Delta_A,\mathcal A)$, we can get the corresponding normal
pre-modulation $\mathcal M_A$, which is called the {\em
corresponding normal pre-modulation} of $A$.

For an artinian algebra $A$ and its related normal generalized
algebra $k(\Delta_A,\mathcal A)$, by \cite{Li}, there always exists
a surjective homomorphism of algebras $\pi: k(\Delta_A,\mathcal
A)\rightarrow T(A/r, r/r^2)$, and from the result in \cite{DK}, it
follows that any such algebra $A$ with separable quotient $A/r$ is
isomorphic to a quotient algebra of $k(\Delta_A,\mathcal A)$ by an
admissible ideal.

An artinian algebra $A$ is said to be of {\em
Gabriel-type}$^{\cite{LL}}$ if it is a quotient of a normal
generalized path algebra. As an improvement, in \cite{LL}, we show
that for an artinian $k$-algebra $A$ splitting over its radical,
there is a surjective algebra homomorphism $ \phi: k(\Delta_{A},
\mathcal{A})\rightarrow A$ with $ J^s\subseteq \ker(\phi)\subseteq
J$ for some positive integer $s$, that is,  $A$ is of Gabriel-type.

Moreover, we give in \cite{LL} that if an artinian algebra $A$ of
Gabriel-type with admissible ideal is hereditary, then $A$ is
isomorphic to its related generalized path algebra
$k(\Delta_A,\mathcal A)$. Hence, according to Corollary
\ref{prop2.8}, we have
\begin{proposition}\label{coro2.14} For a hereditary artinian algebra $A$ of
Gabriel-type with admissible ideal and its corresponding
$k$-pre-modulation $\mathcal M=(A_i,$$_iM_j)$,
  the category
$Rep(\mathcal{M})$ (resp. $rep(\mathcal{M})$) is
 equivalent to the category $ModA$ (resp. $modA$).
\end{proposition}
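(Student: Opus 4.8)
The plan is to assemble Proposition \ref{coro2.14} from results already established in the excerpt, treating it essentially as a corollary obtained by chaining two equivalences. The statement asserts that for a hereditary artinian algebra $A$ of Gabriel-type with admissible ideal, the representation category $Rep(\mathcal{M})$ (resp. $rep(\mathcal{M})$) of its corresponding $k$-pre-modulation is equivalent to $ModA$ (resp. $modA$). My strategy is to factor the desired equivalence through the generalized path algebra $k(\Delta_A,\mathcal{A})$, using it as the intermediate object that connects the module side to the representation side.

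First I would invoke the result cited from \cite{LL} immediately preceding the statement: if an artinian algebra $A$ of Gabriel-type with admissible ideal is hereditary, then $A$ is isomorphic to its related generalized path algebra, i.e. $A\cong k(\Delta_A,\mathcal{A})$. This is the crucial input, since it replaces $A$ by the concrete combinatorial algebra $k(\Delta_A,\mathcal{A})$. A ring isomorphism induces an equivalence of module categories, so $ModA\simeq Mod_{k(\Delta_A,\mathcal{A})}$ and $modA\simeq mod_{k(\Delta_A,\mathcal{A})}$. Second, I would apply Corollary \ref{prop2.8}, which states that for a generalized path algebra $k(Q,\mathcal{A})$ and its corresponding $k$-pre-modulation $\mathcal{M}=(A_i,\,_iM_j)$, the category $Rep(\mathcal{M})$ (resp. $rep(\mathcal{M})$) is equivalent to $Mod_{k(Q,\mathcal{A})}$ (resp. $mod_{k(Q,\mathcal{A})}$). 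Taking $Q=\Delta_A$, the pre-modulation appearing there is exactly the $\mathcal{M}_A=(A_i,\,_iM_j)$ named in the statement, namely the corresponding normal pre-modulation of $A$ built via Proposition \ref{proposition2.9}. Composing the two equivalences yields $Rep(\mathcal{M})\simeq Mod_{k(\Delta_A,\mathcal{A})}\simeq ModA$, and likewise in the finitely generated case, which is the assertion.

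The main point requiring care is to confirm that the pre-modulation $\mathcal{M}$ named in the proposition genuinely coincides with the one to which Corollary \ref{prop2.8} applies, rather than being a different pre-modulation that happens to share notation. I would verify that the corresponding normal pre-modulation of $A$, defined earlier in this section as the $\mathcal{M}_A$ obtained from $k(\Delta_A,\mathcal{A})$ through Proposition \ref{proposition2.9}, is precisely the corresponding $k$-pre-modulation of the generalized path algebra $k(\Delta_A,\mathcal{A})$ that enters Corollary \ref{prop2.8}. Since both are constructed from $k(\Delta_A,\mathcal{A})$ by the same recipe, they agree, and the identification is immediate.

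I do not anticipate a serious obstacle here: the entire content lies in the quoted theorem of \cite{LL} securing the isomorphism $A\cong k(\Delta_A,\mathcal{A})$, which I am entitled to assume. The only genuine step of the proof is the observation that module-category equivalence is transported along a ring isomorphism, after which the chain of equivalences closes. Accordingly the proof is short: cite \cite{LL} for the isomorphism, pass to module categories, and apply Corollary \ref{prop2.8} to transfer to $Rep(\mathcal{M})$ and $rep(\mathcal{M})$.
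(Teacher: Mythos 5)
Your proposal matches the paper's own argument exactly: the paper derives the proposition by citing \cite{LL} for the isomorphism $A\cong k(\Delta_A,\mathcal A)$ and then applying Corollary \ref{prop2.8} to the generalized path algebra and its corresponding pre-modulation. Your extra care in checking that the pre-modulation in the statement is the same one appearing in Corollary \ref{prop2.8} is a reasonable (and correct) elaboration of what the paper leaves implicit.
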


 From the above discussion, it is better if the ideal $J$ of $k(\Delta_A,\mathcal A)$ is admissible.
 In general, this condition is not satisfied for arbitrary non-basic algebras. Now,
we specially restrict to the case of basic algebras over an
arbitrary field $k$.

\begin{proposition}\label{prop2.15}
Suppose that $B$ is an artinian basic algebra with radical $r=r(B)$
over an arbitrary field $k$ and $B/r\cong F_1\oplus\cdots\oplus F_s$
for central division $k$-algebras $F_i$ satisfying that
$dim_kF_i=n^2_i$  with $(n_i,n_j)=1$ for any $i\not=j$. Then, for
   the associated generalized path algebra $k(\Delta_B, \mathcal F)$ of $B$ with
   $\mathcal F=\{F_1,\cdots,F_s\}$ and the natural quiver $\Delta_B$,
  there
exists an admissible ideal I of $k(\Delta_B,
 \mathcal{F})$ such that $B\cong k(\Delta_B,
 \mathcal{F})/I$.
\end{proposition}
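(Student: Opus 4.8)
The plan is to build the admissible ideal $I$ explicitly by comparing two surjections onto $B$ and using the coprimality condition $(n_i,n_j)=1$ to lift the Wedderburn decomposition of $B/r$ into $B$ itself. First I would invoke the results recalled just before the statement: since $B$ is artinian with radical $r$, there is always a surjective algebra homomorphism $\pi\colon k(\Delta_B,\mathcal F)\rightarrow T(B/r,\,r/r^2)$, and by the result from \cite{DK} quoted in Section 3, if $B/r$ is separable then $B$ is a quotient of $T(B/r,\,r/r^2)$ by an admissible ideal. A central division algebra $F_i$ over $k$ with $\dim_kF_i=n_i^2$ is automatically separable over its center $k$, so $B/r=\bigoplus_{i=1}^sF_i$ is a separable $k$-algebra; composing $\pi$ with the $\mathrm{DK}$-quotient map then yields a surjection $\phi\colon k(\Delta_B,\mathcal F)\rightarrow B$ with $J^t\subseteq\ker\phi\subseteq J^2$ for some $t$, where $J$ is the arrow ideal of the generalized path algebra. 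Setting $I=\ker\phi$ gives the admissible ideal and $B\cong k(\Delta_B,\mathcal F)/I$.

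The genuinely delicate point is that the construction above needs a Wedderburn--Malcev-type lifting of $B/r\cong\bigoplus_iF_i$ to a subalgebra of $B$, so that the vertex algebras $A_i$ used to build $k(\Delta_B,\mathcal F)$ literally sit inside $B$ and so that $\phi$ restricts to the identity on the degree-zero part. This is exactly where the hypothesis $\dim_kF_i=n_i^2$ with $(n_i,n_j)=1$ for $i\neq j$ is used. Over an arbitrary (not algebraically closed) field one cannot in general lift a semisimple quotient through the radical, but the coprimality of the reduced degrees $n_i$ forces the idempotent and subalgebra lifts to be compatible: the index $n_i$ of the simple component $F_i$ is a Morita/Brauer invariant, and the pairwise coprimality guarantees that the central idempotents of $B/r$ lift to orthogonal idempotents $e_i\in B$ whose corner algebras $e_iBe_i$ have semisimple part isomorphic to $F_i$, with no ``mixing'' obstruction across components. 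Thus the main obstacle I anticipate is verifying that a separable lifting section $B/r\hookrightarrow B$ exists and is compatible with the basic structure; I would handle it by applying the separability of $B/r$ (each $F_i$ being separable over $k$) together with the coprime-degree hypothesis to pin down the lifted components uniquely up to conjugacy.

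Once the lifting is in place, the remaining steps are essentially bookkeeping. I would identify $\Delta_B$ as the natural quiver built from $r/r^2$ exactly as in Section 7, with $A_i=F_i$ and $_iM_j=F_i\cdot(r/r^2)\cdot F_j$, and note that because $B$ is basic each $_iM_j$ is a genuine $F_i$-$F_j$-bimodule encoding $\dim(_iM_j)$ as the arrow multiplicity. The surjection $\phi$ is then defined on generators by sending the lifted copy of $\bigoplus_iF_i$ to the lifted Wedderburn subalgebra of $B$ and sending each arrow basis element to a chosen preimage in $r$ of the corresponding element of $r/r^2$; surjectivity of $\phi$ follows since $B$ is generated by its semisimple part together with $r$, and $r$ is generated as a two-sided ideal by (preimages of) $r/r^2$ by Nakayama. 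Finally $\ker\phi$ is admissible: the inclusion $\ker\phi\subseteq J^2$ holds because $\phi$ is an isomorphism in degrees $0$ and $1$ by construction, and $J^t\subseteq\ker\phi$ holds because $r$ is nilpotent, say $r^t=0$, so every path of length $\geq t$ maps into $r^t=0$. Hence $I=\ker\phi$ is admissible and $B\cong k(\Delta_B,\mathcal F)/I$, as claimed.
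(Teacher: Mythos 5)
There is a genuine gap, and it concerns precisely the hypothesis $(n_i,n_j)=1$. You place the weight of that hypothesis on a Wedderburn--Malcev lifting of $B/r$ into $B$, but no coprimality is needed there: as you yourself note in your first paragraph, each $F_i$ is a finite-dimensional central simple $k$-algebra, hence separable, so $B/r$ is separable and the lifting (equivalently, the Drozd--Kirichenko presentation $B\cong T(B/r,r/r^2)/I$ with $I$ admissible) already follows. Your second paragraph, which claims that over a general field one cannot lift the semisimple quotient without the coprime-degree condition, contradicts this and attributes the hypothesis to a step where it does no work. The place where the hypothesis is actually indispensable is the step you dismiss as ``bookkeeping'': the claim that $\ker\phi\subseteq J^2$ because ``$\phi$ is an isomorphism in degrees $0$ and $1$ by construction.'' The degree-one part of $k(\Delta_B,\mathcal F)$ is the \emph{free} $F_i$-$F_j$-bimodule $F_i\Omega(i,j)F_j$ of rank $t_{ij}$, whereas the degree-one part of $T(B/r,r/r^2)$ is $_iM_j=F_i(r/r^2)F_j$, which is merely \emph{generated} by $t_{ij}$ elements as a bimodule. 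If $_iM_j$ is not free, the canonical surjection $\pi\colon k(\Delta_B,\mathcal F)\rightarrow T(B/r,r/r^2)$ kills something in degree one, $\ker\phi\not\subseteq J^2$, and your $I=\ker\phi$ is not admissible. So the statement you assert ``by construction'' is exactly the nontrivial content of the proposition.

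The paper closes this gap as follows: since each $F_i$ is a central division algebra of degree $n_i$ and $(n_i,n_j)=1$, the tensor product $F_j\otimes F_i^{op}$ is again a central division algebra (a standard Brauer-group fact), so every $F_i$-$F_j$-bimodule --- in particular $_iM_j$ --- is free. Hence $\pi$ is an isomorphism, $k(\Delta_B,\mathcal F)\cong T(B/r,r/r^2)$, and the admissible ideal furnished by Drozd--Kirichenko for the tensor algebra transports directly to the generalized path algebra. To repair your argument you should delete the idempotent-lifting discussion and instead prove freeness of the bimodules $_iM_j$ from the coprimality hypothesis; with that in hand your composite $\phi=\mathrm{(DK\ quotient)}\circ\pi$ works and the rest of your verification (surjectivity via Nakayama, $J^t\subseteq\ker\phi$ from nilpotency of $r$) is fine.
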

\begin{proof}
By the conclusion in pp.191 of \cite{P}, $B/r$ is separable since
$dim_kF_i<+\infty$ and the center $Z(F_i)=k$ for any $i$.
 Due to this and Theorem 8.5.4 of \cite{DK},  there is an admissible ideal $I$ of $T(B/r, r/r^2)$
  such that $B\cong T(B/r, r/r^2)/I$.

  Furthermore, since each $F_i$ is a central
division algebra with $dim_kF_i=n^2_i<+\infty$ and $(n_i,n_j)=1$ for
any $i\not=j$, it is known from pp.78 of \cite{LGZH} that
$F_j\otimes F_i^{op}$ is a central division algebra. Hence,  $r/r^2$
is a free $F_i$-$F_j$-bimodule for any $i,j$. Then, according to the
definition of generalized path algebra, we have $k(\Delta_B,\mathcal
F)\cong T(B/r, r/r^2)$.
\end{proof}

This proposition generalizes the Gabriel theorem for a
 $k$-splitting basic algebra to that  which is not
necessarily $k$-splitting.

\begin{definition}\label{def5.1}
The {\em natural valued quiver} of an artinian algebra $A$ is
defined to be the induced valued quiver of $k(\Delta_A,\mathcal A)$,
equivalently, the valued quiver of the normal pre-modulation of $A$.
\end{definition}

 Meantimes, from an artinian algebra $A$, one can define another valued quiver
 $(\mathcal Q_A,\mathcal E, \Upsilon)$ (cf. \cite{A}) as follows.

\begin{definition}\label{def7.2} For a $k$-artinian algebra $A$, let $A/r=\oplus_{i=1}^sA_i$ with $A_i\cong M_{n_i}(D_i)$
where $D_i$ are division $k$-algebras for $i=1,\cdots,s$. Denote
$\{T_i\}_{i=1}^s$  the complete set of non-isomorphic simple modules
of $A$.  Define the {\em valued Ext-quiver} $(\mathcal Q_A,\mathcal
E, \Upsilon)$ of $A$ as follows:

(i)~~$\mathcal Q_A=\{1,\cdots,s\}$;

 (ii)~~For $i,j\in \mathcal Q_A$, write an
oriented edge from $i$ to $j$ if $Ext_A^1(T_j,T_i)\not=0$. This
gives the orientation $\Upsilon$;

 (iii)~~For $i,j\in \mathcal Q_A$,  if $Ext_A^1(T_j,T_i)\not=0$, i.e. there is an oriented edge
 from $i$ to $j$, let
  $e_{ij}=dim_{D_i}Ext_A^1(T_j,T_i)$ and
  $e_{ji}=dim_{D_j^{op}}Ext_A^1(T_j,T_i)$ and define the
  valuation $\mathcal E=\{(e_{ij}, e_{ji}): \forall (i,j)\in\mathcal Q_A\times\mathcal
  Q_A\}$.

\end{definition}

The valued Ext-quiver is Morita invariant, but the natural valued
quiver is not so.

Using the notations in Definition \ref{def7.2}, note that $D_i\cong
End_A(T_i)$ for $i\in \mathcal Q_A$. An artinian algebra $A$ is
called {\em $k$-splitting}, or say, {\em splitting} over the ground
field $k$ if  $D_i\cong k$ for each $i$.

For example, $A$ is always $k$-splitting if the ground field $k$ is
algebraically closed.

When $A$ is $k$-splitting, the valued Ext-quiver of $A$ degenerates
to a non-valued quiver, which is just the Ext-quiver of $A$. In this
case, we have the following results from \cite{LL}:

(i)~ The vertex set of the Ext-quiver of $A$ is equal to that of the
natural quiver of $A$.

(ii)~  $t_{ij}=\lceil \frac{m_{ij}}{n_in_j}\rceil $ where $t_{ij}$
and $m_{ij}$ are respectively the arrow numbers of the natural
quiver and the Ext-quiver of $A$ from $i$ to $j$ and $n_i=dim_kT_i$
for the irreducible module $T_i$ of $A$ at the vertex $i$.

(iii)~ If $A$ is a basic algebra, then the Ext-quiver is just the
natural quiver.

Now, their analogues will be given in the case that $A$ is
non-$k$-splitting  in general.

\begin{lemma}\label{lem-natural-ext}
Let $A$ be an artinian algebra with radical $r$ such that
$A/r=\oplus_{i=1}^sA_i$ where $A_i\cong M_{n_i}(F_i)$ for
 division $k$-algebras $F_i ~(i=1,\cdots, s)$. Let $\{u_i\}_{i=1}^s$ be the complete set of
 primitive orthogonal idempotents of $A$ and  $\{T_i\}_{i=1}^s$ be the
corresponding complete set of non-isomorphic $A$-simple modules.
Then, for $i,j\in\{1,\cdots,s\}$,
\[
dim_k(u_ir/r^2u_j)=dim_kExt_A^1(T_j,T_i).
\]
\end{lemma}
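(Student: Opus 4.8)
The plan is to compute both sides as the same multiplicity, namely the number of copies of $T_i$ in a semisimple layer of a projective module, so that the common division-algebra factor $dim_k End_A(T_i)$ cancels. I work with left $A$-modules (the convention for which the Peirce block $u_i(r/r^2)u_j$ matches the pair $(T_j,T_i)$; the right-module version is entirely analogous with the roles of $i$ and $j$ interchanged). Recall that $T_i=Au_i/ru_i$ is the simple top of the indecomposable projective $P_i=Au_i$, that $\mathrm{rad}(P_j)=ru_j$ and $\mathrm{rad}^2(P_j)=r^2u_j$, and that $D_i:=End_A(T_i)$ satisfies $dim_k D_i=dim_k(u_iT_i)$: indeed $u_iT_i\cong u_iAu_i/u_iru_i\cong D_i^{op}$ as $k$-spaces, since $u_iAu_i$ is local with residue division ring $D_i$ because $u_i$ is primitive.

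First I would establish the standard identification $Ext_A^1(T_j,T_i)\cong Hom_A(ru_j/r^2u_j,\,T_i)$. For this I apply $Hom_A(-,T_i)$ to the projective-cover sequence $0\to ru_j\to Au_j\to T_j\to 0$. Since $Au_j$ is projective, $Ext_A^1(Au_j,T_i)=0$ and the connecting map surjects onto $Ext_A^1(T_j,T_i)$; since $T_i$ is simple, every homomorphism out of $Au_j$ annihilates its radical, so the restriction map $Hom_A(Au_j,T_i)\to Hom_A(ru_j,T_i)$ is zero; and, again because $T_i$ is simple, $Hom_A(ru_j,T_i)$ factors through $ru_j/\mathrm{rad}(ru_j)=ru_j/r^2u_j$. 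These three facts together give the isomorphism.

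Next I note that $N:=ru_j/r^2u_j$ is annihilated by $r$, hence is a semisimple left $A/r$-module, say $N\cong\bigoplus_l T_l^{\oplus m_l}$. Two dimension counts then finish the argument. On the Ext side, $Hom_A(N,T_i)\cong End_A(T_i)^{m_i}$, so $dim_k Ext_A^1(T_j,T_i)=m_i\,dim_k D_i$. On the bimodule side, $u_iT_l=0$ for $l\neq i$ (the image of $u_i$ in $A/r$ is a primitive idempotent lying only in the block $A_i$), so $u_iN=(u_iT_i)^{m_i}$ and hence $dim_k(u_iN)=m_i\,dim_k(u_iT_i)=m_i\,dim_k D_i$. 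Thus $dim_k Ext_A^1(T_j,T_i)=dim_k(u_iN)$.

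It remains to identify $u_iN$ with $u_i(r/r^2)u_j$ as $k$-spaces, which is a short Peirce computation: $u_iN=(u_iru_j+r^2u_j)/r^2u_j\cong u_iru_j/(u_iru_j\cap r^2u_j)$, while $u_i(r/r^2)u_j=(u_iru_j+r^2)/r^2\cong u_iru_j/(u_iru_j\cap r^2)$, and one checks $u_iru_j\cap r^2=u_iru_j\cap r^2u_j$ since any element of $u_iru_j$ already ends in $u_j$. Combining the displays gives $dim_k(u_ir/r^2u_j)=dim_k Ext_A^1(T_j,T_i)$. The one genuinely delicate point, and the place where the non-$k$-splitting hypothesis enters, is the bookkeeping of the division algebras: I must make sure that the factor $dim_k D_i$ produced by $Hom_A(N,T_i)$ is exactly the factor $dim_k(u_iT_i)$ produced by the Peirce component, so that it is the multiplicities $m_i$ — not the $k$-dimensions — that are really being matched on the two sides.
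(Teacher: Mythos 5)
Your proposal is correct and follows essentially the same route as the paper: both apply $Hom_A(-,T_i)$ to the projective-cover sequence $0\to ru_j\to Au_j\to T_j\to 0$, observe that the restriction map vanishes because $T_i$ is simple, and thereby reduce $Ext_A^1(T_j,T_i)$ to $Hom_A(ru_j/r^2u_j,T_i)$, which is then matched with $u_i(r/r^2)u_j$ using the semisimplicity of $ru_j/r^2u_j$ over $A/r$. The only cosmetic difference is in the last step, where the paper flips to $Hom_{A/r}(T_i, rP_j/r^2P_j)\cong Hom_A(P_i,rP_j/r^2P_j)$ and invokes $Hom_A(P_i,M)\cong u_iM$, while you carry out the equivalent multiplicity count and Peirce identification by hand; your explicit check that the factor $dim_kD_i$ appears identically on both sides is exactly the bookkeeping the paper's isomorphisms encode.
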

\begin{proof}
For $i,j=1,\cdots,s$, let $P_j\rightarrow T_j$ be a projective
cover, then there is the exact sequence $0\rightarrow
rP_j\rightarrow P_j\rightarrow T_j\rightarrow 0.$ Applying the
functor $Hom_A(-, T_i)$, we obtain the exact sequence of $k$-linear
spaces
\[
0\rightarrow Hom_A(T_j, T_i)\rightarrow Hom_A(P_j, T_i)\stackrel
h\rightarrow Hom_A(rP_j, T_i)\rightarrow Ext_A^1(T_j,
T_i)\rightarrow 0.
\]

By Schur Lemma, $Hom_A(T_j, T_i)=\left\{\begin{array}{ll}F_j,
  &  \mbox{if $i=j$}\\
0,  &  \mbox{if $i\not=j$.}
\end{array}
\right. $ Since $T_j\cong P_j/rP_j$ and $rT_i=0$ for any $i,j$,
 it follows that $h$ must be zero map for $i\not=j$. Hence, we have
 \begin{equation}\label{ne1}
Hom_A(rP_j, T_i)\cong Ext_A^1(T_j, T_i).
 \end{equation}
On the other hand,
\begin{equation}\label{ne2}
Hom_A(rP_j, T_i)=Hom_A(rP_j/r^2P_j, T_i)\cong Hom_{A/r}(rP_j/r^2P_j,
T_i).
\end{equation}
 Since $A/r$ is semisimple, $rP_j/r^2P_j$ is a direct sum of
some $T_p$ as $A/r$-module. Thus,
\begin{equation}\label{ne3}
Hom_{A/r}(rP_j/r^2P_j, T_i)\cong Hom_{A/r}(T_i, rP_j/r^2P_j)\cong
Hom_A(P_i, rP_j/r^2P_j).
\end{equation}
Using $P_j=Au_j$ for any $j$ and by Proposition I.4.9 of \cite A, we
have $Hom_A(P_i, r^mP_j)\cong u_ir^mu_j$ for any positive integer
$m$. Via these isomorphisms for $m=1, 2$, we can get $Hom_A(P_i,
rP_j/r^2P_j)\cong u_ir/r^2u_j$. Using this and (\ref{ne1}),
(\ref{ne2}), (\ref{ne3}), we get $u_ir/r^2u_j\cong Ext_A^1(T_j,T_i)$
as $k$-linear spaces. Then the required result follows.
\end{proof}

In this lemma, the ground field $k$ is arbitrary  and $A$ is not
assumed to be basic, which are different with ones in Proposition
III.1.14 of \cite{A}.

Now, we give firstly the relationship between the natural valued
quiver and the Ext-valued quiver for a basic algebra $B$.

Two valued quivers $(\mathcal G, \mathcal D, \Omega)$ and $(\mathcal
Q, \mathcal E, \Upsilon)$ are called {\em pair-opposite equal} if
$\mathcal G=\mathcal Q$ and $\Omega=\Upsilon$ and $d_{ij}=e_{ji}$,
$d_{ji}=e_{ij}$ for any $(d_{ij}, d_{ji})\in \mathcal D$, $(e_{ij},
e_{ji})\in \mathcal E$.

From this definition, we think that two pair-opposite equal valued
quivers are indeed equal under the meaning of re-writing the order
of pairs of valution.

For the radical $r$ of $B$, we have $B/r\cong F_1\oplus\cdots\oplus
F_s$ for division $k$-algebras $F_i$.

The normal regular modulation $\mathcal M=(F_i,~_iM_j)$ is
constructed from
 $k(\Delta_B, \mathcal F)$ with $_iM_j=F_i(r/r^2)F_j$ as
 $F_i$-$F_j$-bimodules for any $i,j\in\Delta_0$.

 The natural valued quiver of $B$, that is,  the
 induced valued quiver from $k(\Delta_B, \mathcal F)$, is $(\Delta_0,\mathcal
 D,\Omega)$ with a unique oriented edge from $i$ to $j$ when
$_iM_j\not=0$
 and
   $\mathcal D=\{(d_{ij}, d_{ji}): (i,j)\in
 \Delta_0\times\Delta_0\}$ for $d_{ij}=dim(_iM_j)_{F_j}=t_{ij}\varepsilon_i$,
 $d_{ji}=dim_{F_i}(_iM_j)=t_{ij}\varepsilon_j$ and
 $t_{ij}=dim_{(F_j^{op}\otimes F_i)}$$_iM_j$ the arrow number from $i$ to $j$ in $\Delta_B$ and
 $\varepsilon_i=dim_kF_i$, $\varepsilon_j=dim_kF_j$ satisfying
 $d_{ij}\varepsilon_j=d_{ji}\varepsilon_i$.

\begin{theorem}\label{thm-natural-ext}
The natural valued quiver $(\Delta_0,\mathcal D, \Omega)$ and the
valued Ext-quiver $(\mathcal Q, \mathcal E, \Upsilon)$ of an
artinian basic $k$-algebra $B$ are pair-opposite equal.
\end{theorem}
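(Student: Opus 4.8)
The plan is to establish the three conditions in the definition of pair-opposite equal: that the vertex sets coincide, that the orientations coincide, and that the valuations are related by $d_{ij}=e_{ji}$ and $d_{ji}=e_{ij}$. The main engine will be Lemma \ref{lem-natural-ext}, which identifies $u_ir/r^2u_j$ with $Ext_A^1(T_j,T_i)$. Since $B$ is basic, each $F_i$ appears with multiplicity one, so the primitive idempotent $u_i$ attached to the $i$-th block is genuinely primitive and the simple modules $T_i$ are exactly indexed by $\Delta_0=\{1,\dots,s\}$; thus the vertex sets $\Delta_0$ and $\mathcal Q$ are literally the same set $\{1,\dots,s\}$. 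For the orientation, I would observe that there is an edge $i\to j$ in the natural valued quiver precisely when $_iM_j=F_i(r/r^2)F_j\neq 0$, equivalently when $u_ir/r^2u_j\neq 0$ (using that $B$ is basic so $u_i$ picks out the $i$-th simple block), while there is an edge $i\to j$ in the valued Ext-quiver precisely when $Ext_B^1(T_j,T_i)\neq 0$. By Lemma \ref{lem-natural-ext} these two nonvanishing conditions are equivalent, so $\Omega=\Upsilon$.

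The heart of the argument is the comparison of valuations. For the natural valued quiver I have $d_{ij}=\dim(_iM_j)_{F_j}$ and $d_{ji}=\dim_{F_i}(_iM_j)$ with $_iM_j=u_i(r/r^2)u_j$ in the basic case (here $F_i\cong End_B(T_i)\cong u_iBu_i/\cdots$ since $n_i=1$). For the valued Ext-quiver, Definition \ref{def7.2} gives $e_{ij}=\dim_{F_i}Ext_B^1(T_j,T_i)$ and $e_{ji}=\dim_{F_j^{op}}Ext_B^1(T_j,T_i)$ (with $D_i=F_i$ since $B$ is basic). The strategy is to transport the $F_i$-$F_j$-bimodule isomorphism $u_ir/r^2u_j\cong Ext_B^1(T_j,T_i)$ of Lemma \ref{lem-natural-ext}, upgrading it from a mere $k$-linear isomorphism to a bimodule isomorphism, and then to read off dimensions on each side. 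Concretely, I would check that the chain of isomorphisms in the proof of Lemma \ref{lem-natural-ext} is compatible with the left $F_i$-action and the right $F_j$-action, so that $_iM_j\cong Ext_B^1(T_j,T_i)$ as $F_i$-$F_j$-bimodules. Then $d_{ij}=\dim(_iM_j)_{F_j}$ matches the right-$F_j$ dimension of $Ext$, which is $e_{ji}$ (note the swap: $e_{ji}$ is defined via $F_j^{op}$, i.e.\ the right $F_j$-structure), and dually $d_{ji}=\dim_{F_i}(_iM_j)=e_{ij}$.

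The main obstacle I anticipate is precisely the bookkeeping of the swap between the two sides, and verifying that Lemma \ref{lem-natural-ext}'s isomorphism is bimodule-equivariant rather than only $k$-linear. The subtlety is that $Ext_B^1(T_j,T_i)$ naturally carries a left $End_B(T_i)=F_i$ action and a right $End_B(T_j)=F_j$ action, so a right $F_j$-module is the same as a left $F_j^{op}$-module; I must confirm that under the identification $_iM_j\cong Ext_B^1(T_j,T_i)$ the left $F_i$ on $_iM_j$ corresponds to the $End_B(T_i)$ action and the right $F_j$ on $_iM_j$ corresponds to the $End_B(T_j)$ action, so that the valuation indices line up as $d_{ij}=e_{ji}$ and $d_{ji}=e_{ij}$ rather than the reverse. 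Once the bimodule structures are correctly matched through the isomorphisms (\ref{ne1})--(\ref{ne3}) and the identification $Hom_B(P_i,r^mP_j)\cong u_ir^mu_j$, the dimension count is immediate, and all three conditions of pair-opposite equality follow. I would present the orientation and vertex-set equalities quickly and devote the bulk of the proof to this equivariance and dimension comparison.
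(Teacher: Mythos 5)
Your treatment of the vertex sets and orientations is exactly the paper's. For the valuations, however, you take a genuinely different and heavier route. You propose to upgrade the isomorphism $u_ir/r^2u_j\cong Ext_B^1(T_j,T_i)$ of Lemma \ref{lem-natural-ext} to an $F_i$-$F_j$-bimodule isomorphism and then read off the one-sided dimensions directly. The paper never does this: it uses the lemma only as an equality of $k$-dimensions, computes $m_{ij}:=\dim_k(u_ir/r^2u_j)=t_{ij}\varepsilon_i\varepsilon_j$ from the freeness of $_iM_j$ as an $F_i$-$F_j$-bimodule, computes $\dim_kExt_B^1(T_j,T_i)=e_{ij}\varepsilon_i=e_{ji}\varepsilon_j$ from the definition of the valued Ext-quiver, and cancels $\varepsilon_i$ (resp.\ $\varepsilon_j$) against $d_{ji}=t_{ij}\varepsilon_j$ (resp.\ $d_{ij}=t_{ij}\varepsilon_i$) to get $d_{ji}=e_{ij}$ and $d_{ij}=e_{ji}$. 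This completely sidesteps the equivariance verification you identify as your main obstacle. That obstacle is real if you insist on your route: the step $Hom_{A/r}(rP_j/r^2P_j,T_i)\cong Hom_{A/r}(T_i,rP_j/r^2P_j)$ in the proof of Lemma \ref{lem-natural-ext} is a multiplicity count over a semisimple algebra and is not naturally equivariant --- it can twist the $End_B(T_i)$-action by the opposite. Your conclusion survives anyway because a left $F_i$-space and the corresponding right $F_i^{op}$-space have the same dimension, but that observation is precisely what lets you drop the equivariance requirement altogether and argue with $k$-dimensions as the paper does. So your plan is workable, but the bulk of the effort you allocate to equivariance buys nothing that a division by $\varepsilon_i$ does not already give.
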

\begin{proof}
Firstly, $\Delta_0=\mathcal Q=\{1,\cdots,s\}$.

By  Lemma \ref{lem-natural-ext}, there is an oriented edge from $i$
to $j$ in $(\Delta_0,\mathcal D, \Omega)$ if and only if there is an
oriented edge from $i$ to $j$ in  $(\mathcal Q, \mathcal E,
\Upsilon)$, that is, $\Omega=\Upsilon$.

In the natural quiver $\Delta_B$ of $B$, for any $i,j\in\Delta_0$,
the arrow number $t_{ij}=dim_{F_j^{op}\otimes F_i}$$_iM_j$.

Denote $m_{ij}=dim_k(u_ir/r^2u_j)$. We have
$u_ir/r^2u_j=F_i(r/r^2)F_j=$$_iM_j$. Then,
\begin{equation}\label{page17-1}
m_{ij}=dim_{F_j^{op}\otimes F_i}(_iM_j)dim_k(F_j^{op}\otimes
F_i)=t_{ij}\varepsilon_i\varepsilon_j
\end{equation}
 Thus,
\begin{equation}\label{page17}
dim_kExt_B^1(T_j,T_i)=dim_{End_B(T_i)}Ext_B^1(T_j,T_i)dim_kEnd_B(T_i)=e_{ij}dim_kF_i=e_{ij}\varepsilon_i
\end{equation}
By Lemma \ref{lem-natural-ext} and (\ref{page17}),
$m_{ij}=e_{ij}\varepsilon_i$. Similarly, it is given
$m_{ij}=e_{ji}\varepsilon_j$.

By (\ref{page17-1}) and  $d_{ij}=t_{ij}\varepsilon_i$,
$d_{ji}=t_{ij}\varepsilon_j$, we get that
$d_{ji}\varepsilon_i=t_{ij}\varepsilon_j\varepsilon_i=m_{ij}=e_{ij}\varepsilon_i$.
Thus, $d_{ji}=e_{ij}$. Similarly, $d_{ij}=e_{ji}$.

In summary, $(\Delta_0,\mathcal D, \Omega)$ and  $(\mathcal Q,
\mathcal E, \Upsilon)$ are pair-opposite equal.
\end{proof}

We think this consequence is an evidence of the rationality of the
notion of natural valued quiver of an artinian algebra $A$ as given
above.

By definitions, natural quiver and natural valued quiver can be
constructed with each other. Hence,  natural valued quiver is not
Morita invariant since so is the frontal notion.

Now, we discuss the relation between the natural valued quiver
$((\Delta_0)_A,\mathcal
 D,\Omega)$ and the valued Ext-quiver $(\mathcal Q_A,
\mathcal E, \Upsilon)$ for an arbitrary artinian algebra $A$.

\begin{theorem}\label{lastthm}
For an artinian algebra $A$, the natural valued quiver
$((\Delta_0)_A,\mathcal
 D,\Omega)$ and the valued Ext-quiver $(\mathcal Q_A,
\mathcal E, \Upsilon)$ are satisfied through the following
relations:

(i)~The vertex sets are equal. i.e. $(\Delta_0)_A=\mathcal Q_A$;

(ii)~The orientations are the same, i.e. $\Omega=\Upsilon$;

(iii)~The valuations $\mathcal D= \{(d_{ij},d_{ji}): (i,j)\in
(\Delta_0)_A\times (\Delta_0)_A\}$ and  $\mathcal E=\{(e_{ij},
e_{ji}): (i,j)\in\mathcal Q_A\times\mathcal Q_A\}$ hold the
formulae:
\begin{equation}\label{mainformula}
 d_{ji}=e_{ij}n^2_j\frac{t_{ij}}{m_{ij}}~~~~~~~~~~~~d_{ij}=e_{ji}n^2_i\frac{t_{ij}}{m_{ij}}
 \end{equation}
  for any vertices $i, j$. Here,
 $t_{ij}$ is the arrow number in the natural quiver $\Delta_A$ of $A$ from $i$ to $j$,
 $m_{ij}$ is the arrow number in the natural quiver $\Delta_B$ of the associated basic algebra $B$
  of $A$ from $i$ to $j$ and $n_i=\frac{dim_kS_i}{dim_kEndS_i}$ for the simple module $S_i$ of $A$ at the vertex $i$.

\end{theorem}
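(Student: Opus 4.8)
The plan is to establish the three assertions separately, obtaining the valuation formula (iii) by reducing the general artinian algebra $A$ to its associated basic algebra $B$, for which Theorem~\ref{thm-natural-ext} already applies, and bridging the two sides by the Morita invariance of the valued Ext-quiver. Throughout I would write $A/r=\bigoplus_{i=1}^s A_i$ with $A_i\cong M_{n_i}(D_i)$ and set $\delta_i=\dim_k D_i$, so that $\dim_k A_i=n_i^2\delta_i$, $\dim_k S_i=n_i\delta_i$ and $\dim_k\mathrm{End}_A(S_i)=\delta_i$; in particular the quantity $n_i=\dim_k S_i/\dim_k\mathrm{End}(S_i)$ appearing in the statement coincides with the matrix size $n_i$.

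Assertion (i) is immediate, since both vertex sets are canonically the index set $\{1,\dots,s\}$ of the simple components of $A/r$. For (ii) I would invoke Lemma~\ref{lem-natural-ext}: an arrow $i\to j$ occurs in the natural quiver precisely when $A_i(r/r^2)A_j\neq 0$, and since $A_i(r/r^2)A_j$ is a direct sum of $n_in_j$ copies of $u_i(r/r^2)u_j$ (the primitive idempotents inside $A_i$ being mutually conjugate), this happens exactly when $u_i(r/r^2)u_j\neq 0$. The Lemma identifies $\dim_k u_i(r/r^2)u_j$ with $\dim_k\mathrm{Ext}^1_A(T_j,T_i)$, so an arrow $i\to j$ exists in the natural valued quiver iff $\mathrm{Ext}^1_A(T_j,T_i)\neq 0$ iff there is an arrow $i\to j$ in the valued Ext-quiver, which gives $\Omega=\Upsilon$.

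For (iii) the first step is a dimension count on the $A$-side. Since the natural valued quiver of $A$ is the induced valued quiver of $k(\Delta_A,\mathcal A)$, the relevant bimodule is the \emph{free} $A_i$-$A_j$-bimodule on the arrow set $\Omega(i,j)$, of bimodule rank $t_{ij}$. By the computation in the proof of Lemma~\ref{le2.6}, such a bimodule has right $A_j$-rank $t_{ij}\dim_k A_i$ and left $A_i$-rank $t_{ij}\dim_k A_j$, so
\[
d_{ij}=t_{ij}\,\dim_k A_i=t_{ij}\,n_i^2\delta_i,\qquad
d_{ji}=t_{ij}\,\dim_k A_j=t_{ij}\,n_j^2\delta_j.
\]
The second step transfers the Ext-data to $B$. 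Here $B/r_B\cong\bigoplus_i D_i$ with each $D_i\cong\mathrm{End}_A(T_i)$ a division algebra, so $B$ is basic with all matrix sizes equal to $1$; applying Theorem~\ref{thm-natural-ext} to $B$ shows that its natural valued quiver is pair-opposite equal to its valued Ext-quiver, and the latter coincides with that of $A$ because the valued Ext-quiver is Morita invariant. Running the same free-bimodule count for $B$ (now with $m_{ij}$ the arrows of $\Delta_B$ and $\dim_k D_i=\delta_i$) and reading off the pair-opposite equality $d^B_{ij}=e_{ji}$, $d^B_{ji}=e_{ij}$ yields
\[
e_{ji}=m_{ij}\,\delta_i,\qquad e_{ij}=m_{ij}\,\delta_j .
\]
Substituting $\delta_i=e_{ji}/m_{ij}$ and $\delta_j=e_{ij}/m_{ij}$ into the formulas for $d_{ij},d_{ji}$ then gives exactly $d_{ij}=e_{ji}\,n_i^2\,t_{ij}/m_{ij}$ and $d_{ji}=e_{ij}\,n_j^2\,t_{ij}/m_{ij}$, which is \eqref{mainformula}; note that $m_{ij}\neq 0$ whenever $t_{ij}\neq 0$, so the ratio is well defined on every arrow.

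I expect the genuine work to lie entirely in the second step: Theorem~\ref{thm-natural-ext} only sees basic algebras, so the whole relation is forced through $B$, and the only thing legitimizing this passage is that the valued Ext-quiver (unlike the natural valued quiver) is a Morita invariant. The point needing care is the compatibility of the two descriptions of the division data—$D_i$ as a simple factor of $A/r$ versus $D_i\cong\mathrm{End}_A(T_i)$ together with its appearance for $B$—and the fact that $\mathrm{Ext}^1_A(T_j,T_i)$ carries the correct $D_i$-$D_j$-bimodule structure, so that $e_{ij},e_{ji}$ read off the same $\delta_i,\delta_j$ on both sides. Everything else is the elementary bookkeeping of ranks of free bimodules already set up in Section~2.
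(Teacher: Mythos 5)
Your proposal is correct and follows essentially the same route as the paper's own proof: establish (i) and (ii) from the definitions and Lemma \ref{lem-natural-ext}, compute $d_{ij}=t_{ij}\dim_kA_i$ and $d_{ji}=t_{ij}\dim_kA_j$ via the free-bimodule rank count of Lemma \ref{le2.6}, and obtain $e_{ij}=m_{ij}\dim_kF_j$, $e_{ji}=m_{ij}\dim_kF_i$ by passing to the associated basic algebra $B$ through the Morita invariance of the valued Ext-quiver and Theorem \ref{thm-natural-ext}, then substitute. The only cosmetic difference is that you spell out the multiplicity argument $A_i(r/r^2)A_j\cong (u_i(r/r^2)u_j)^{n_in_j}$ for (ii), which the paper leaves implicit.
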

\begin{proof}
(i)~ This is easy due to their definitions.

(ii)~ By Lemma \ref{lem-natural-ext}, $_iM_j=A_i(r/r^2)A_j\not=0$
iff  $u_ir/r^2u_j\not=0$ iff
 $Ext_A^1(T_j,T_i)\not=0$. Then, the claim follows from the definitions of the orientations
 $\Omega$ and $\Upsilon$.

(iii)~ By the proof of Lemma \ref{le2.6},
$d_{ij}=t_{ij}\varepsilon_i$, $d_{ji}=t_{ij}\varepsilon_j$, where
$t_{ij}$ is the arrow number in $\Delta_A$ from $i$ to $j$,
$\varepsilon_i=dim_kA_i$ for $A/r_A=A_1\oplus \cdots\oplus A_s$.

Firstly, the valued Ext-quiver $(\mathcal Q_A, \mathcal E,
\Upsilon)$ of $A$ is equal to that of its associated basic algebra
$B$.  And, by Theorem \ref{thm-natural-ext}, the latter is
pair-opposite equal to the natural valued quiver
$((\Delta_0)_B,\mathcal
 D_B,\Omega_B)$ of $B$. Hence, $(\mathcal Q_A,
\mathcal E, \Upsilon)$ is pair-opposite equal to
$((\Delta_0)_B,\mathcal
 D_B,\Omega_B)$. Therefore, for
 $\mathcal E=\{(e_{ij}, e_{ji}): (i,j)\in\mathcal Q_A\times\mathcal Q_A\}$ and
  $\mathcal D_B= \{(d^B_{ij},d^B_{ji}): (i,j)\in (\Delta_0)_A\times (\Delta_0)_A\}$,
  it follows that
 \begin{equation}\label{eq10}
 e_{ij}=d^B_{ji}=m_{ij}\varepsilon^B_j ~~~\text{and}  ~~~e_{ji}=d^B_{ij}=m_{ij}\varepsilon^B_i
 \end{equation}
  where $m_{ij}$ is the arrow number in the natural quiver $\Delta_B$ of $B$ from $i$ to $j$ and  $\varepsilon^B_i=dim_kF_i$ for $B/r_B=F_1\oplus \cdots\oplus F_s$ with division $k$-algebras $F_i\cong EndS_i$ for the simple module $S_i$ of $A$ at the vertex $i$ ($i=1,\cdots,s$).

  Due to Wedderburn-Artin Theorem, for any $i$, $A_i\cong M_{n_i}(k)\otimes F_i$
  for a positive integer $n_i$, where $n_i=\frac{dim_kS_i}{dim_kEndS_i}$. Then , we get
\begin{equation}\label{eq11}
 d_{ji}=t_{ij}n^2_j\varepsilon^B_j ~~~\text{and}
~~~d_{ij}=t_{ij}n^2_i\varepsilon^B_i
\end{equation}
 By (\ref{eq10}) and (\ref{eq11}), it follows that
 $d_{ji}=e_{ij}n^2_j\frac{t_{ij}}{m_{ij}}$ and
$d_{ij}=e_{ji}n^2_i\frac{t_{ij}}{m_{ij}}$.
\end{proof}

Obviously, Theorem \ref{thm-natural-ext} is just in the special case
of Theorem \ref{lastthm} when $A$ is basic.

   By the formula given in \cite{LL}, when  $A$ is $k$-splitting, it holds that $t_{ij}=\lceil
\frac{m_{ij}}{n_in_j}\rceil $. Then, in this case, from the formula
(\ref{mainformula}) we get
\begin{corollary} For a $k$-splitting artinian algebra $A$, using
the notations in Theorem \ref{lastthm}, it holds that   for any
vertices $i, j$,
 $d_{ji}=e_{ij}n^2_j\frac{1}{m_{ij}}\lceil
\frac{m_{ij}}{n_in_j}\rceil$ and
$d_{ij}=e_{ji}n^2_i\frac{1}{m_{ij}}\lceil
\frac{m_{ij}}{n_in_j}\rceil$.
\end{corollary}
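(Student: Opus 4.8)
The plan is to obtain this Corollary as a direct specialization of the master formula (\ref{mainformula}) proved in Theorem \ref{lastthm}. That theorem already establishes, for an arbitrary artinian algebra $A$, the identities
\[
d_{ji}=e_{ij}n^2_j\frac{t_{ij}}{m_{ij}},\qquad d_{ij}=e_{ji}n^2_i\frac{t_{ij}}{m_{ij}},
\]
where $t_{ij}$ is the arrow number from $i$ to $j$ in the natural quiver $\Delta_A$, $m_{ij}$ the corresponding number in the natural quiver $\Delta_B$ of the associated basic algebra $B$, and $n_i=dim_kS_i/dim_kEndS_i$. So the only content to be added is to rewrite the factor $t_{ij}$ under the extra hypothesis that $A$ is $k$-splitting.

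First I would unwind what $k$-splitting buys us for the integers $n_i$. By the definition of $k$-splitting given after Definition \ref{def7.2}, one has $End_A(S_i)\cong D_i\cong k$ for every $i$, so that $n_i=dim_kS_i/dim_kEndS_i=dim_kS_i$. This is precisely the quantity $n_i=dim_kT_i$ occurring in the arrow-count formula recorded from \cite{LL}. Hence the hypotheses of that formula are met, and it yields $t_{ij}=\lceil m_{ij}/(n_in_j)\rceil$ in the splitting case.

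Finally I would substitute this value of $t_{ij}$ into each of the two identities above, giving
\[
d_{ji}=e_{ij}n^2_j\frac{1}{m_{ij}}\lceil \frac{m_{ij}}{n_in_j}\rceil,\qquad d_{ij}=e_{ji}n^2_i\frac{1}{m_{ij}}\lceil \frac{m_{ij}}{n_in_j}\rceil,
\]
as claimed. I do not expect any genuine obstacle beyond the bookkeeping of the previous paragraph: the argument is purely a specialization, so that all the real work sits inside Theorem \ref{lastthm} and inside the cited \cite{LL} formula. The single point that deserves a careful check is that the normalization of $n_i$ used in Theorem \ref{lastthm} coincides with the one used in \cite{LL}, and this holds exactly because the $k$-splitting assumption collapses $dim_kEndS_i$ to $1$; without that assumption the two normalizations differ and the clean closed form above would fail.
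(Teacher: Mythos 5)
Your proposal is correct and follows exactly the paper's route: the paper likewise obtains the corollary by substituting the $k$-splitting arrow-count formula $t_{ij}=\lceil m_{ij}/(n_in_j)\rceil$ from \cite{LL} into the identities (\ref{mainformula}) of Theorem \ref{lastthm}. Your extra remark that $k$-splitting forces $\dim_k End S_i=1$, reconciling the two normalizations of $n_i$, is a sensible check that the paper leaves implicit.
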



\begin{thebibliography}{20}
\bibitem{A} M.Auslander,  I.Reiten and S.O.Smal$\o$, Representation
Theory of Artin Algebra,  {\em Cambridge University Press},
Cambridge, 1995
\bibitem{BSZ} R.Bautista,L.Salmeron and R.Zuazua, Differential tensor
algebras and their module categories, {\em London Math. Soc. Lecture
Note Series}, Vol. 362, Cambridge University Press, Cambridge, 2009
\bibitem{C} L.L.Chen, Quivers and Hopf algebras, {\em Doctoral Dissertation}, Zhejiang
University, China (2008)
\bibitem{CL} F.U.Coelho and S.X.Liu,  Generalized path
 algebras,  In: \emph{Interactions between ring theory and repersentations
 of algebras (Murcia), Lecture Notes in Pure and Appl.
 Math.,}  Marcel-Dekker, New York, \textbf{210}: 53-66, 2000
 \bibitem{Dem}L.Demonet, Mutations of group species with potentials
 and their representations: Applications to cluster algebras, arXiv:
 1003.5078v2
 \bibitem{D}V.Dlab, Representations of valued graph, Seminaire de
 mathematiques superieures, {\em Les presses de luniversite de
 montreal}, Montreal, Canada, 1980
\bibitem{DR}V.Dlab and C.M.Ringel, Indecomposable representations of graphs and algebras, {\em Memoirs of the Amer. Math. Soc.},
 6(173):1-57 (1976)
\bibitem{DK}Y.A.Drozd and V.V.Kirichenko, Finite Dimensional Algebras,
Springer-Verlag, Berlin, 1994
\bibitem{FZ} S.Fomin and A.Zelevinsky, Cluster algebras,1. Foundations, {\em J.
Amer. Math. Soc}, 15:497-529 (2002)
\bibitem{FZ2} S.Fomin and A.Zelevinsky, Cluster algebras IV: Coefficients, {\em Comp.
Math.}, 143: 112¨C164 (2007)
\bibitem{K}V.G.Kac, Infinite root systems, representations of graphs, and
invariant theory, {\em Inventiones Math.} 56, 57-92 (1980)
\bibitem{K1}V.G.Kac, Root systems, representations of quivers and invariant theory. Invariant theory (Montecatini, 1982), 74¨C 108,
{\em Lecture Notes in Math.}, 996, Springer-Verlag 1983
\bibitem{Ke}
B.Keller, On differential graded categories. International Congress
of Mathematicians, Vol. II, 151-190, {\em Eur. Math. Soc.},
Z$\ddot{u}$rich, 2006
\bibitem{KR} M.M.Kleiner and A.V.Rojter, Representations of differential graded
categories. {\em Matrix problems (Russian)}, pp. 5-70, Akad. Nauk
Ukrain. SSR Inst. Mat., Kiev, 1977

\bibitem{La}S.Ladkani, Perverse
equivalences, BB-tilting, mutations and applications, {\em arXiv:
1001.4765 (2010)}
\bibitem{Li}F.Li, Characterization of left Artinian algebras through pseudo
path algebras,
  {\em J. Australia Math. Soc.}, {\bf 83}: 385-416 (2007)
 \bibitem{LC}F.Li and L.L.Chen, The natural quiver of an artinian
 algebra, {\em Algebras and Representation Theory}, {\bf 13}(5): 623-636 (2010)
\bibitem{LL} F.Li and Z.Z.Lin, Approach to artinian algebras via
natural quivers, {\em Transactions of Amer. Math. Soc.}, to appear,
2011
 \bibitem{LLiu} F.Li and G.X.Liu, Generalized path coalgebra and generalized dual Gabriel theorem,
 {\em Acta Mathematica Sinica }(Chinese Series), {\bf 51(5)}(2008): 853-862
\bibitem{LT} F.Li and D.Z.Tan, The graded Lie algebra of grading differential operators on
a path algebra, in preparation
 \bibitem{LW}F.Li and D.W.Wen, Ext-quiver, AR-quiver and natural quiver of an algebra, in {\em Geometry,
 Analysis and Topology of Discrete Groups, Advanced Lectures in Mathematics 6}, Editors: Lizhen Ji, Kefeng Liu,
 Lo Yang, Shing-Tung Yau, Higher Education Press and International Press, Beijing, 2008
\bibitem{Liu2} S.X.Liu, Isomorphism problem for tensor algebras over valued graphs,  {\em Sci. China Ser. }(A) {\bf
34(3)}(1991): 267--272
\bibitem{LGZH} S.X.Liu, J.Y.Guo, B.Zhu and Y.Han, Rings and Algebras, {\em Science Press}, Beijing, 2009.
\bibitem{LXL} S.X.Liu, J.Xiao and Y.L.Luo, Isomorphism of path
algebras, {\em Journal of Beijing Normal University}, {\bf
31(4)}(1986): 483-487
\bibitem{M0}S.Maclane, Homology,
Berlin-G$\ddot{o}$ttingen-Heideberg, {\em Springer-Verlag }, 1963
\bibitem{P} R.S.Pierce, Associative
Algebras, {\em Springer-Verlag}, New York, 1982
\bibitem{R} A.V.Rojter,  Matrix problems and representations of BOCSs.
Representation theory,I  Proc. Workshop, Carleton Univ., Ottawa,
Ont., 1979, pp.288-324, {\em Lecture Notes in Math.}, 831, Springer,
Berlin-New York, 1980.
\bibitem{RK} A.V.Rojter and M.M.Kleiner,  Representations of differential graded
categories. Representations of algebras, Proc. Internat. Conf.,
Carleton Univ., Ottawa, Ont., 1974, pp. 316.339. {\em Lecture Notes
in Math.}, Vol. 488, Springer, Berlin, 1975
\bibitem{SV} B. Sevenhant
and M. Van den Bergh, A relation between a conjecture of Kac and the
structure of the Hall algebra, {\em Journal of Pure and Applied
Algebra} 160(2001): 319¨C332

\end{thebibliography}
\end{document}